\newtheorem{thm}{Theorem}[section]
\newtheorem{lemm}[thm]{Lemma}
\newtheorem{prop}[thm]{Proposition}
\newtheorem{cor}[thm]{Corollary}
\theoremstyle{definition}
\newtheorem{defn}[thm]{Definition}
\numberwithin{equation}{section}
\newcommand{\bC}{{\mathbb C}}
\newcommand{\bN}{{\mathbb N}}
\newcommand{\bR}{{\mathbb R}}
\newcommand{\cA}{{\mathcal A}}
\newcommand{\cE}{{\mathcal E}}
\newcommand{\cH}{{\mathcal H}}
\newcommand{\cL}{{\mathcal L}}
\newcommand{\cM}{{\mathcal M}}
\newcommand{\cR}{{\mathcal R}}
\newcommand{\cS}{{\mathcal S}}
\newcommand{\actson}
{\curvearrowright}
\DeclareMathOperator{\Proj}{Proj}
\DeclareMathOperator{\Prob}{Prob}
\DeclareMathOperator{\Fix}{Fix}
\DeclareMathOperator{\Bern}{Bern}
\DeclareMathOperator{\id}{id}
\DeclareMathOperator{\Hamm}{Hamm}
\DeclareMathOperator{\tr}{tr}
\DeclareMathOperator{\Sub}{Sub}
\DeclareMathOperator{\Span}{span}
\DeclareMathOperator{\dom}{dom}
\DeclareMathOperator{\ran}{ran}
\DeclareMathOperator{\IRS}{IRS}
\newcommand{\ip}[1]{\langle #1 \rangle}
\newcommand\Sym{\operatorname{Sym}}
\newcommand\Stab{\operatorname{Stab}}
\begin{document}



\title{Approximate homomorphisms and sofic approximations of orbit equivalence relations}

\author{Ben Hayes}
\address{\parbox{\linewidth}{Department of Mathematics, University of Virginia, \\
141 Cabell Drive, Kerchof Hall,
P.O. Box 400137
Charlottesville, VA 22904}}
\email{brh5c@virginia.edu}
\urladdr{https://sites.google.com/site/benhayeshomepage/home}

\author{Srivatsav Kunnawalkam Elayavalli}
\address{\parbox{\linewidth}{Department of Mathematics, University of California, San Diego, 9500 Gilman Drive \# 0112, La Jolla, CA 92093}}
\email{srivatsav.kunnawalkam.elayavalli@vanderbilt.edu}
\urladdr{https://sites.google.com/view/srivatsavke}

\begin{abstract}

We show that for every countable group, any sequence of approximate homomorphisms with values in permutations can be realized as the restriction of a sofic approximation of an orbit equivalence relation. Moreover, this orbit equivalence relation is uniquely determined by the invariant random subgroup of the approximate homomorphisms. We record applications of this result to recover various known stability and conjugacy characterizations for almost homomorphisms of amenable groups. 
\end{abstract}

\keywords{sofic groups, bernoulli action, IRS, permutation stability, 2020 Mathematics Subject Classification: 37A15 (Primary), 22D25, 22D20 (Secondary)}

\maketitle

\section{Introduction}

This paper is a follow-up to \cite{GenEZ}. We are once again interested in approximate homomorphisms of groups. Recall that  approximate homomorphisms are sequences of maps $\sigma_{n}\colon G\to H_{n}$ where $G,H_{n}$ are groups, each $H_{n}$ has a bi-invariant metric $d_{n}$ and the sequence satisfies
\[\lim_{n\to\infty}d_{n}(\sigma_{n}(gh),\sigma_{n}(g)\sigma_{n}(h))=0  \textnormal{ for all $g,h\in G$}.\]
In our previous article, we studied automorphic conjugacy of sofic approximations. 
In this article, we are interested in when approximation homomorphisms with values in permutation groups are asymptotically conjugate via a sequence of permutations.

One motivation for the study of asymptotic conjugacy of approximate homomorphisms of groups is the notion of \emph{stability}. Stability of homomorphisms dates back to the 40's via work of Hyers \cite{UlamStable} (answering a question of Ulam), and asks when approximate homomorphisms can be perturbed to a sequence of honest homomorphisms. E.g. we say a group is \emph{permutation stable}  if every sequence of approximate homomorphisms of the group with values in permutation groups is pointwise close to a sequence of honest homomorphisms.
See \cite{AP1, APStable, BLT, EckShul, ESS, HS2, IoanaStabilityT}, for work in this direction. The notion of stability gives a potential approach to proving the existence of a nonsofic group  \cite{glebskyrivera,AP1,BBFlexStab}, has connections to cohomology \cite{CGLT, DQuasiRep, DMatrixStability}, and to operator algebras \cite{CDAlmostFlat, DAsyStable, HT, ScottSri2019ultraproduct}.

While not immediate, there is a natural connection between stability and asymptotic conjugacy. For example, one could consider a modification of permutation stability where we only demand that every \emph{sofic approximation} (instead of every approximate homomorphism) is close to an honest homomorphism (this concept is introduced in \cite{AP1} under the name weak stability). When the group is amenable, then as shown in \cite[Theorem 1.1]{AP1}, the  Kerr-Li \cite[Lemma 4.5]{KLi2} and Elek-Szabo \cite{elekxzabo} uniqueness theorems imply that this modified version of permutation stability is equivalent to residual finiteness.

The situation for permutation stability of general asymptotic homomorphisms of amenable groups was fully classified by Becker-Lubotzky-Thom \cite[Theorem 1.3]{BLT} who showed that an amenable group $G$ is permutation stable if and only if every invariant random subgroup  of $G$  is a limit of IRS's coming from actions on finite sets. Invariant random subgroups arose from the works \cite{MVY16, BergeronGaboriau, StuckZimmer, Vershik12}, we recall the definition in \ref{defn: IRS}. A different proof of this was given in \cite[Theorem 3.12]{APStable}, which reformulates the results in terms of traces.

Given a sequence $(\sigma_{n})_{n}$ of approximate homomorphisms and a free ultrafilter $\omega$, one can  naturally produce an invariant random subgroup  associated to $(\sigma_{n})_{n},\omega$ which we denote by $\IRS(\sigma_{\omega})$ (see Definition \ref{defn: IRS of approx homom}).
Given an invariant random subgroup $\Theta$ of $G$, following \cite[Proposition 13]{MVY16}, we can  construct a generalized Bernoulli shift action $G\actson (X_{\Theta},\mu_{\Theta})$ associated to $\Theta$. Let $\cR_{G,X_{\Theta}}$ be the orbit equivalence relation of this action. We show that for every sequence of asymptotic homomorphisms of $G$ whose given IRS is $\Theta$, we may extend the asymptotic homomorphism to a sofic approximation (in the sense of \cite{ElekLip}) of $\cR_{G,X_{\Theta}}$. This extension result holds for any countable group $G$.

\begin{thm}\label{thm: main theorem intro}
Let $G$ be a group, and let $(\sigma_{n})_{n=1}^{\infty}$ be a sequence of approximate homomorphisms of $G$ and $\omega$ a free ultrafilter on $\bN$. Let $\sigma_{\omega}$ be the ultraproduct of this sequence. Let $\Theta$ be the IRS of $(\sigma_{\omega})$, and let $\cR_{G,X_{\Theta}}$ be the orbit equivalence relation of the $\Theta$-Bernoulli shift over $G$ with base $[0,1]$. Then $\sigma_{\omega}$ extends to a sofic approximation of $\cR_{G,X_{\Theta}}$.
\end{thm}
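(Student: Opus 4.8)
The plan is to exhibit the required sofic approximation of $\cR_{G,X_\Theta}$ by hand: I would keep the permutations $\sigma_n$ as the ``group part'' of the finite model and adjoin independent uniform labels to encode the Bernoulli base $[0,1]$. The first step is a reduction. Since $\cR_{G,X_\Theta}$ is generated by the $G$-action, and since the measure algebra of $(X_\Theta,\mu_\Theta)$ is generated, under this action, by the coordinate functions of the Bernoulli base, a sofic approximation of $\cR_{G,X_\Theta}$ in the sense of \cite{ElekLip} amounts to the data of an approximate homomorphism $\tau_n\colon G\to\Sym(d_n)$ together with measurable labels $\ell_n\colon\{1,\dots,d_n\}\to[0,1]$ such that: (i) the $\tau_n$ are asymptotically multiplicative along $\omega$; and (ii) for each finite $F\subseteq G$, the joint law along $\omega$ of the tuple $\big(\ell_n(\tau_n(g)^{-1}i)\big)_{g\in F}$, with $i$ uniform in $\{1,\dots,d_n\}$, converges to the joint law of the corresponding coordinate functions on $(X_\Theta,\mu_\Theta)$ (asymptotic equivariance of the $\ell_n$ with the $\tau_n$, and convergence of the canonical trace on $L(\cR_{G,X_\Theta})$, are both subsumed in (ii)). So it is enough to produce such $(\tau_n,\ell_n)$ with $\tau_n=\sigma_n$.

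Next I would bring in the IRS. Forming along $\omega$ the ultraproduct probability space $(X_\omega,\mu_\omega)=\prod_\omega(\{1,\dots,d_n\},\mathrm{unif})$ and the p.m.p. action $\sigma_\omega\colon G\actson X_\omega$, Definition \ref{defn: IRS of approx homom} says the law of $x\mapsto\Stab_{\sigma_\omega}(x)$ under $\mu_\omega$ is $\Theta$. Using the construction of the $\Theta$-Bernoulli shift from \cite{MVY16} — a generic point of $X_\Theta$ is a pair $(H,f)$ with $H$ a $\Theta$-random subgroup and $f$ a conditionally independent uniform $[0,1]$-labelling of the coset space $H\backslash G$, on which $G$ acts by permuting coordinates with stabilizer $H$ — this translates into the statement that for every finite $F\subseteq G$ the $\mu_\omega$-law of the coincidence pattern $\{(g,h)\in F\times F:\sigma_\omega(g)x=\sigma_\omega(h)x\}$ equals the law, for a $\Theta$-random point of $X_\Theta$, of the coincidence pattern among the coordinates moved by $F$ (in both cases the pair $(g,h)$ coincides iff $h^{-1}g\in H$, an event of $\Theta$-probability $\mu_\omega(\Fix(\sigma_\omega(h^{-1}g)))$).

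Then I would adjoin the labels: choose, independently of everything else, i.i.d. uniform $\ell_n\colon\{1,\dots,d_n\}\to[0,1]$ (or finite-valued approximations, sending the mesh to $0$ along $\omega$), and check that $(\sigma_n,\ell_n)_n$ satisfies (i) and (ii). Condition (i) is immediate since $(\sigma_n)_n$ is an approximate homomorphism. For (ii): given $g_1,\dots,g_k\in F$ and $i$ uniform, the coordinates of $\big(\ell_n(\sigma_n(g_j)^{-1}i)\big)_{j}$ are i.i.d. uniform except that the $j$-th and $\ell$-th agree exactly when $\sigma_n(g_j)^{-1}i=\sigma_n(g_\ell)^{-1}i$; along $\omega$ the law of this coincidence pattern converges to the $\mu_\omega$-law of $\{\sigma_\omega(g_j)^{-1}x=\sigma_\omega(g_\ell)^{-1}x\}$, which by the previous step equals the law of the coordinate coincidence pattern in $X_\Theta$, and in $X_\Theta$ the values of $f$ on those coordinates are likewise i.i.d. uniform glued along precisely that pattern. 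Hence the two joint distributions agree in the limit, which is (ii). Since the model's restriction to the bisections $x\mapsto gx$ of $\cR_{G,X_\Theta}$ is by construction $\sigma_n(g)$, this shows $\sigma_\omega$ extends to it.

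The step I expect to be the main obstacle is the precise bookkeeping behind the reduction in the first paragraph: verifying, directly from the definition in \cite{ElekLip}, that an approximate homomorphism together with a sequence of labels really does assemble into a genuine sofic approximation of $\cR_{G,X_\Theta}$ — in particular the pseudogroup multiplicativity and the convergence of traces on $L(\cR_{G,X_\Theta})$, despite $\sigma_n$ being only approximately a homomorphism — and identifying exactly which joint distributions attached to the $\Theta$-Bernoulli shift need to be matched, and why the coincidence-pattern data is enough. A more structural alternative would be to prove that $X_\Theta$ is weakly contained in the ultraproduct action $\sigma_\omega$ (a relative Ab\'ert--Weiss theorem, cf.\ \cite{MVY16}) and then to upgrade weak containment to an honest $G$-equivariant measure-preserving embedding $L^\infty(X_\Theta,\mu_\Theta)\hookrightarrow L^\infty(X_\omega,\mu_\omega)=\prod_\omega\ell^\infty(\{1,\dots,d_n\})$ by countable saturation of the ultraproduct; there the obstacle would be the saturation argument together with the handling of the countably many equivariance constraints.
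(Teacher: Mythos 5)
Your overall strategy is the same as the paper's: keep $\sigma_{n}$ as the permutation part, attach uniform $[0,1]$-labels to the points of $[d_{n}]$ to model the Bernoulli base, identify the limiting joint distribution of the labelled orbits with $\mu_{\Theta}$ via the coincidence pattern (which is exactly where the IRS enters, together with the atomlessness of Lebesgue measure), and reduce the verification of soficity to a countable generating family of functions and the group $G$ inside $[\cR]$. That last reduction, which you correctly flag as the main bookkeeping burden, is the content of the paper's Proposition \ref{prop: equivalent definition of a sofic rep}~(\ref{item: countable reduction sofic rep}), and your identification of which joint distributions must be matched is the content of its Lemma \ref{lem:building the embedding via prob}~(\ref{item: on average wk* convergence}) and (\ref{item:asymptotic equivariance}).

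There is, however, one genuine gap. Your verification of condition (ii) computes the law of $\bigl(\ell_{n}(\sigma_{n}(g_{j})^{-1}i)\bigr)_{j}$ when \emph{both} $i$ and the labelling $\ell_{n}$ are random; this is the annealed statement $\int_{[0,1]^{d_{n}}}(\phi_{x})_{*}(u_{d_{n}})\,dx\to\mu_{\Theta}$, i.e.\ a first-moment computation. A sofic approximation requires a single deterministic labelling $\ell_{n}$ for each $n$ whose \emph{empirical} distribution $\frac{1}{d_{n}}\sum_{i}\delta_{(\ell_{n}(\sigma_{n}(g_{j})^{-1}i))_{j}}$ converges to the right law, and a random measure need not be close to its mean. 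The summands indexed by $i$ are not independent (two indices $i,i'$ share label-coordinates whenever $\sigma_{n}(E)^{-1}(i)\cap\sigma_{n}(E)^{-1}(i')\ne\varnothing$), so you must observe that for fixed $i$ at most $|E|^{2}$ indices $i'$ overlap, giving a variance bound of order $1/d_{n}$; this is Lemma \ref{lem:building the embedding via prob}~(\ref{item:variance argument}) in the paper, and it is then combined with a Chebyshev/diagonal extraction over a countable dense family of test functions to produce the specific $x_{n}$. Without this concentration step your argument only shows that the labellings work on average, not that any particular sequence of labellings does. The alternative route you sketch (weak containment of the $\Theta$-Bernoulli shift in the Loeb-space action plus saturation) is also viable and is precisely the proof the paper sketches, via \cite{RobinTDWeakEquiv}, before opting for the direct probabilistic argument.
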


In the case where $\Theta$ is $\delta_{\{1\}}$, our result says that any sofic approximation of $G$ extends to a sofic approximation of any Bernoulli shift over $G$. This was previously proved in \cite{ElekLip, PaunSofic,PoppArg, Bow} (in fact, \cite{PoppArg} proves that the generalized Bernoulli shift $\cR_{G\actson (X,\mu)^{G/H}}$ is sofic if $G$ is sofic, $H$ is amenable and $(X,\mu)$ is any probability space).  The proof of the case $\Theta=\delta_{\{1\}}$ given in \cite[Proposition 7.1]{ElekLip} (see also \cite[Theorem 3.1]{CMPSpecial} and \cite[Theorem 8.1]{Bow}) provided inspiration for the proof we give of Theorem \ref{thm: main theorem intro}.

In the context of Theorem \ref{thm: main theorem intro}, if $G$ is amenable, then $\cR_{G,X_{\Theta}}$ is hyperfinite by Ornstein-Weiss \cite{OWAnnounce}, \cite[II \S 3]{OrnWeiss} (see Connes-Feldman-Weiss \cite{CFW} for more general results), and thus any two sofic approximations of $\cR_{G,X_{\Theta}}$ are conjugate \cite[Proposition 1.20]{PaunSofic}.
Using this conjugacy fact, the results of \cite[Theorem 3.12]{APStable},\cite[Theorem 1.3]{BLT} are corollaries of Theorem \ref{thm: main theorem intro}.
This again illustrates the utility of approximate conjugacy results in the context of stability. 

\subsection*{Comments on some applications} To demonstrate the utility of Theorem \ref{thm: main theorem intro} we recover with separate proof ideas, some results characterizing conjugacy for almost representations of amenable group appearing in \cite{NSStable, ELEK20122593, AP1, BLT}. Namely, those results are proved via usage of the asymptotic combinatorial notation of hyperfinite graphs, and on Benjamini-Schramm convergence. Our proof is in some sense more ``continuous", and is based purely in ergodic theory and probabilistic arguments. In particular, we do not need the combinatorial notion of hyperfiniteness of graph sequences or Benjamini-Schramm convergence. 
In some sense, one can view ergodic theory as a limit of combinatorics  and so our methods can be viewed as a limiting version of those in \cite{NSStable, ELEK20122593, APStable,BLT}. Both the combinatorial and continuous approaches have their utility. We believe one benefit of our approach is that it reveals that one can work directly with the limiting object, and does not always have to resort to working with combinatorics at the finitary level and taking limits. 
\subsection*{Organization of the paper}  We begin in Section \ref{sec:preliminaries} by recalling some background on metric groups and approximate homomorphisms. We also give the definition of the IRS of a sequence of approximate homomorphisms here and restate Theorem \ref{thm: main theorem intro} in these terms. In Section \ref{sec: OE background}, we recall the background on orbit equivalence relations and their sofic approximations we need and state Theorem \ref{thm: main theorem intro} in these terms. In Section \ref{sec: applications} 
we deduce \cite[Theorem 3.12]{APStable} from \ref{thm: main theorem intro}, and also deduce  \cite[Theorem 1.3]{BLT} from Theorem \ref{thm: main theorem intro}. We also explain the connection between action traces and IRS's in this section. In Section \ref{sec: real Bern proof}, we prove Theorem \ref{thm: main theorem intro}.


\section{Preliminaries}\label{sec:preliminaries}

Throughout we consider $G$ to be a countable group. Let $\Sym(n)$ denote the finite symmetric group of rank $n$. The normalized  Hamming distance, which is a bi-invariant metric on $\Sym(n)$, is given by $$d_{\Hamm}(\chi_{1}, \chi_{2})= \frac{|\{i: \chi_{1}(i)\neq \chi_{2}(i)\}|}{n}.$$ Recall the following.

\begin{defn}\label{defn: IRS}
    A sequence of maps $\sigma_{n}\colon G\to \Sym(d_{n})$ are said to be approximate homomorphisms if for all $g,h\in G$ we have $$\lim_{n\to \infty} d_{\Hamm}(\sigma_n(gh), \sigma_n(g)\sigma_n(h))=0.$$
\end{defn}

Let $\omega$ be a free ultrafilter on $\mathbb{N}$. Let $(G_n,d_n)$ be countable groups with bounded bi-invariant metrics. Denote by $$\prod_{n\to \omega} (G_n,d_n)= \{(g_n)_{n\in \mathbb{N}}\}/\{(g_n): \lim_{n\to \omega} d_n(g_n,1_{G_n})=0\}. $$ Observe that by the bi-invariance property of the metrics $d_n$ the subgroup   $\{(g_n): \lim_{n\to \omega} d_n(g_n,1_n)=0\}$ is a normal subgroup. If $(g_{n})_{n}\in \prod_{n}G_{n},$ we let $(g_{n})_{n\to\omega}$ denote its image in the ultraproduct. 
In the above context, a sequence of approximate homomorphisms $\sigma_{n}\colon G\to \Sym(d_{n})$ naturally produces a homomorphism into $\prod_{n\to\omega}(\Sym(d_{n}),d_{\Hamm})$ by
\[\sigma_{\omega}(g)=(\sigma_{n}(g))_{n\to\omega}.\]
For a nonnegative integer $k$, we use $[k]=\{1,\cdots,k\}$. We also set $[0]=\varnothing$.

Suppose that $X$ is a compact, metrizable space. Assume $(Y,\nu)$ is a standard probability space and that we have a Borel map $Y\to \Prob(X)$ given by $y\mapsto \mu_{y}$. Then by the Riesz representation theorem, we can define $\int_{Y}\mu_{y}\,d\nu(y)$ to be the unique probability measure $\eta$ satisfying
\[\int_{X} f\,d\eta=\int_{Y}\int_{X}f\,d\mu_{y}\,d\nu(y)\]
for all $f\in C(X)$.


\subsection{Preliminaries on IRS's}\label{IRS prelims}

Given a sequence of approximate homomorphisms $\sigma_{n}\colon G\to \Sym(d_{n})$, define $S_{\sigma_{n}}\colon [d_{n}]\to \{0,1\}^{G}$ by $(S_{\sigma_{n}})(j)(g)=1_{\{j\}}(\sigma_{n}(g)(j))$. Throughout the paper, for a finite set $E$, we use $u_{E}$ for the uniform measure on $E$ and we typically use $u_{d}$ instead of $u_{[d]}$. Set $\Theta_{n}=(S_{\sigma_{n}})_{*}(u_{d_{n}})$. It turns out that subsequential limits of $\Theta_{n}$ can be nicely described in terms of well known objects.

\begin{defn}\label{defn: basics of IRS}
Let $G$ be  a countable, discrete group. We let $\Sub(G)$ be the set of subgroups of $G$, which we regard as a subspace of $\{0,1\}^{G}$ by identifying a subgroup with its indicator function. We equip $\Sub(G)$ with the topology induced from this inclusion. Then $\Sub(G)$ is a closed subset of $\{0,1\}^{G}$ and is thus compact in the induced topology. We let $\IRS(G)$ be the set of probability measures on $\Sub(G)$ which are invariant under the conjugation action of $G$ given by $g\cdot H=gHg^{-1}$ for all $g\in G, H\in \Sub(G)$.
\end{defn}
We equip $\Prob(\{0,1\}^{G})$ with the weak$^{*}$-topology (viewing complex Borel measures on $\{0,1\}^{G}$ as the dual of $C(\{0,1\}^{G})$).
We often regard $\Prob(\Sub(G))$ as a closed subset of $\Prob(\{0,1\}^{G})$ by identifying $\Prob(\Sub(G))$ with the probability measures which assign $\Sub(G)$ total mass one.
In \cite[Proposition 13]{MVY16}, it is shown that $\Theta\in \IRS(G)$ if and only if there is a   probability measure-preserving action $G\actson (X,\mu)$ so that $\Theta=\Stab_{*}(\mu)$ where $\Stab\colon X\to \Sub(G)$ is given by $\Stab(x)=\{g\in G:gx=x\}$. So IRS's are a naturally occurring construction when considering general (i.e. not assumed essentially free) probability measure-preserving actions. One can think of a sofic approximation as a sequence of almost free almost actions on finite sets. From this perspective, it is reasonable to expect IRS's to arise when one considers more general almost actions (i.e. asymptotic homomorphisms) that are not asymptotically free almost actions. 
The following lemma explains exactly how IRS's arise from approximate homomorphisms.

\begin{lemm}
Let $G$ be  a countable, discrete group and $\sigma_{n}\colon G\to \Sym(d_{n})$ approximate homomorphisms.
 For every free ultrafilter $\omega\in \beta\bN\setminus\bN$ we have that
\[\Theta_{\omega}=\lim_{n\to\omega}\Theta_{n}\in \IRS(G).\]
\end{lemm}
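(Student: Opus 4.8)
The plan is to realize $\Theta_\omega$ as an ultralimit inside the weak$^*$-compact space $\Prob(\{0,1\}^{G})$ and then to check, one at a time, the two properties that make it an element of $\IRS(G)$: that it is concentrated on $\Sub(G)$, and that it is invariant under the conjugation action. Both checks will be reductions to finitely-many-coordinate (cylinder) events, after which the only inputs needed are the approximate homomorphism hypothesis and the bi-invariance of $d_{\Hamm}$. First I would observe that, $G$ being countable, $\{0,1\}^{G}$ is compact metrizable, so $\Prob(\{0,1\}^{G})$ is weak$^*$-compact and $\Theta_\omega := \lim_{n\to\omega}\Theta_n$ is a well-defined Borel probability measure on $\{0,1\}^{G}$; moreover $\Theta_\omega(C)=\lim_{n\to\omega}\Theta_n(C)$ for every clopen $C$, since $\mathbf 1_C$ is continuous.

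To see that $\Theta_\omega$ is concentrated on $\Sub(G)$, I would write $\Sub(G)$ as the countable intersection of the clopen sets $C_0=\{A: 1\in A\}$ and, for $g,h\in G$, $C_{g,h}=\{A:\ (g\in A\text{ and }h\in A)\Rightarrow g^{-1}h\in A\}$; it then suffices to show $\Theta_n(C_0)\to 1$ and $\Theta_n(C_{g,h})\to 1$. For $C_0$, bi-invariance of $d_{\Hamm}$ gives $d_{\Hamm}(\sigma_n(1),\id)=d_{\Hamm}(\sigma_n(1\cdot 1),\sigma_n(1)\sigma_n(1))\to 0$, so $1\in S_{\sigma_n}(j)$ for $u_{d_n}$-almost every $j$. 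For $C_{g,h}$, the approximate homomorphism property together with $d_{\Hamm}(\sigma_n(g^{-1}),\sigma_n(g)^{-1})\to 0$ (which follows from $d_{\Hamm}(\sigma_n(g^{-1})\sigma_n(g),\id)\le d_{\Hamm}(\sigma_n(g^{-1})\sigma_n(g),\sigma_n(1))+d_{\Hamm}(\sigma_n(1),\id)\to 0$ and right-invariance) yields $d_{\Hamm}(\sigma_n(g^{-1}h),\sigma_n(g)^{-1}\sigma_n(h))\to 0$; off the corresponding exceptional set of vanishing $u_{d_n}$-measure, $\sigma_n(g)(j)=j$ and $\sigma_n(h)(j)=j$ force $\sigma_n(g^{-1}h)(j)=\sigma_n(g)^{-1}\sigma_n(h)(j)=\sigma_n(g)^{-1}(j)=j$, i.e.\ $S_{\sigma_n}(j)\in C_{g,h}$. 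Hence each $\Theta_n(C_{g,h})\to 1$ and $\Theta_\omega(\Sub(G))=1$.

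For conjugation invariance, fix $g\in G$; since cylinder sets and their complements generate the topology, it is enough to show $\Theta_\omega(\{A: gAg^{-1}\in C\})=\Theta_\omega(C)$ when $C=\{A: F\subseteq A\}$ for a finite $F\subseteq G$. Here $\Theta_n(C)=u_{d_n}(\{j:\sigma_n(h)(j)=j\ \forall h\in F\})$ while $\Theta_n(\{A: gAg^{-1}\in C\})=u_{d_n}(\{j:\sigma_n(g^{-1}hg)(j)=j\ \forall h\in F\})$. The crucial point is that $\sigma_n(g)$ is a genuine bijection of $[d_n]$ --- hence preserves $u_{d_n}$ --- and $\sigma_n(g)^{-1}\big(\{k:\sigma_n(h)(k)=k\ \forall h\in F\}\big)=\{j:\sigma_n(g)^{-1}\sigma_n(h)\sigma_n(g)(j)=j\ \forall h\in F\}$; since $d_{\Hamm}(\sigma_n(g)^{-1}\sigma_n(h)\sigma_n(g),\sigma_n(g^{-1}hg))\to 0$ for each of the finitely many $h\in F$ (three applications of bi-invariance and the approximate homomorphism property), the latter set differs from $\{j:\sigma_n(g^{-1}hg)(j)=j\ \forall h\in F\}$ by a set of $u_{d_n}$-measure $\to 0$. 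Taking measures and using that $\sigma_n(g)^{-1}$ is measure preserving gives $\Theta_n(\{A: gAg^{-1}\in C\})-\Theta_n(C)\to 0$, hence equality of the $\omega$-limits; thus $\Theta_\omega$ is conjugation-invariant and $\Theta_\omega\in\IRS(G)$.

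I do not anticipate a real obstacle, as the statement is soft; the step needing the most care is the invariance argument, where one must pass to cylinder events \emph{before} invoking the hypothesis --- so that only finitely many relations of $G$ enter and the exceptional sets genuinely have measure $\to 0$ --- and must use bi-invariance of the Hamming metric twice: once to replace $\sigma_n(g^{-1})$ by $\sigma_n(g)^{-1}$, and once to control the conjugate $\sigma_n(g)^{-1}\sigma_n(h)\sigma_n(g)$.
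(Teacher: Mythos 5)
Your argument is correct and follows essentially the same route as the paper's: both reduce to finitely-supported cylinder events, use approximate multiplicativity to control exceptional sets of vanishing $u_{d_{n}}$-measure, and exploit the fact that $\sigma_{n}(g)$ is an honest measure-preserving permutation of $[d_{n}]$ to obtain conjugation invariance. The only cosmetic differences are that you verify concentration on $\Sub(G)$ via clopen sets (so the ultralimit of measures is exact and the Portmanteau theorem the paper invokes is not needed), and that you use the single closure condition $g,h\in A\Rightarrow g^{-1}h\in A$ in place of the paper's separate product-closure and inverse-closure conditions.
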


\begin{proof}
Let $G\actson \{0,1\}^{G}$ be given by $(g\cdot x)(h)=x(g^{-1}hg)$. We use $\alpha$ for the induced action on functions: $(\alpha_{g}f)(x)=f(g^{-1}\cdot x)$ for all $f\colon \{0,1\}^{G}\to \bC$ Borel and all $g\in G$. Note that if $F\subset G$ is finite, then we can find a sequence $\Omega_{n}\subseteq [d_{n}]$ (depending upon $F$) with $u_{d_{n}}(\Omega_{n})\to 1$ so that for all $j\in \Omega_{n}$,  all $1\leq k\leq (2023)!$, all $g_{1},\cdots,g_{k}\in F\cup F^{-1}\cup \{e\}$, and all $s_{1},\cdots,s_{k}\in \{\pm 1\}$ we have
\[\sigma_{n}(g_{1}^{s_{1}}\cdots g_{k}^{s_{k}})(j)=\sigma_{n}(g_{1})^{s_{1}}\cdots\sigma_{n}(g_{k})^{s_{k}}(j).\]
Then if $g,h\in F$ and $j\in \Omega_{n}$, we have:
\[(g\cdot S_{n}(j))(h)
=S_{n}(\sigma_{n}(g)j)(h).\]
This shows that:
\begin{equation}\label{eqn:almost IRS invariance}
\lim_{n\to\infty}u_{d_{n}}\left(\bigcap_{h\in E}\{j:(g\cdot S_{n}(j))(h)=S_{n}(\sigma_{n}(g)j)(h)\}\right)=1 \textnormal{ for all $g\in G,E\subseteq G$ finite.}
\end{equation}
For a finite $E\subseteq G$, let $A_{E}=\{f\circ \pi_{E}:f\in C(\{0,1\}^{E})\}$ where $\pi_{E}\colon \{0,1\}^{G}\to \{0,1\}^{E}$ is given by $\pi_{E}(x)=x\big|_{E}$. It follows from (\ref{eqn:almost IRS invariance}) and permutation invariance of $u_{d_{n}}$ that:
\[\lim_{n\to\infty}\int \alpha_{g}(f)\,d\Theta_{n}-\int f\,d\Theta_{n}=0 \textnormal{ for all $g\in G$, $f\in \bigcup_{E\subseteq G \textnormal{finite}}A_{E}$}.\]
Thus
\[\int f\,d\Theta_{\omega}=\int \alpha_{g}(f)\,d\Theta_{\omega}\]
for all $g\in G,f\in \bigcup_{E}A_{E}$. Stone-Weierstrass implies that $\bigcup_{E}A_{E}$ is norm dense in $C(\{0,1\}^{G})$ and so by the Riesz representation theorem we have shown that $\Theta_{\omega}$ is invariant under the conjugation action of $G$.

It thus suffices to show that $\Theta_{\omega}$ is supported on the space of subgroups of $G$. For $g,h\in G$ let:
\[X_{g,h}=\{x\in \{0,1\}^{G}:x(gh)\geq x(g)x(h)\},\]
\[I_{g}=\{x\in \{0,1\}^{G}:x(g^{-1})=x(g)\}.\]
Then
\[\Sub(G)=\bigcap_{g\in G}I_{g}\cap \bigcap_{g,h\in G}X_{g,h}\cap \{x\in \{0,1\}^{G}:x(e)=1\}.\]
Since $G$ is countable, it suffices to show that $\Theta_{\omega}$ assigns each set in this intersection measure $1$. By the Portmanteau theorem \cite[Theorem 11.1.1]{DudleyProb}, for each $g,h\in G$:
\begin{align*}
&\Theta_{\omega}(\{x\in \{0,1\}^{G}:x(gh)\geq x(g)x(h)\})\\
&\geq\lim_{n\to\omega}u_{d_{n}}(\{j:1_{\{j\}}(\sigma_{n}(gh)(j))\geq 1_{\{j\}}(\sigma_{n}(g)(j))1_{\{j\}}(\sigma_{n}(h)(j))\})\\
&\geq \lim_{n\to\omega}u_{d_{n}}(\{j:\sigma_{n}(gh)(j)=\sigma_{n}(g)\sigma_{n}(h)(j)\})=1.
\end{align*}
 The proofs that $\Theta_{\omega}$ assigns measure $1$ to $I_{g}$ and $\{x\in \{0,1\}^{G}:x(e)=1\}$ are similar.

\end{proof}

We are thus able to make the following definition.

\begin{defn}\label{defn: IRS of approx homom}
Let $G$ be a countable, discrete group and $\sigma_{n}\colon G\to \Sym(d_{n})$ approximate homomorphisms. Define $\Theta_{n}$ as before Definition \ref{defn: basics of IRS}. Given $\omega\in \beta\bN\setminus\bN$, we define $\IRS(\sigma_{\omega})=\lim_{n\to\omega}\Theta_{n}$.
\end{defn}
We remark on an alternate construction of $\IRS(\sigma_{\omega})$. One can take an ultraproduct of the measure spaces $(\{1,\cdots,d_{n}\},u_{d_{n}})$ to obtain a probability space $(\cL,u_{\omega})$ called the Loeb measure space \cite{Loebmeasure}.
The actions $\Sym(d_{n})\actson \{1,\cdots,d_{n}\}$ along with the approximate homomorphisms $\sigma_{n}$ will induce a probability measure-preserving action on $(\cL,u_{\omega})$ in a natural way. Under this action, one can show that $\IRS(\sigma_{\omega})$ is $\Stab_{*}(u_{\omega})$ where $\Stab\colon \cL\to \Sub(G)$ is given by $\Stab(z)=\{g\in G:gz=z\}$. However, we will not need this fact and thus will not prove it.
If $\IRS(\sigma_{\omega})$ does not depend upon $\omega,$ then $\lim_{n\to\infty}\Theta_{n}$ exists. In this case, we call $\lim_{n\to\infty}\Theta_{n}$ the \emph{stabilizer type of $\sigma_{n}$}. E.g.  $\lim_{n\to\infty}\Theta_{n}=\delta_{\{1\}}$ if and only if $\sigma_{n}$ is a sofic approximation.



\section{Background on Orbit equivalence relations and  Theorem \ref{thm: main theorem intro}}\label{sec: OE background}

In order to extend approximate homomorphisms to sofic approximations of relations, we need the following construction (appearing first in \cite{MVY16}) of an action  associated to an IRS, say $\Theta$. The intention is that the action is ``Bernoulli as possible" while still having $\Theta$ as its IRS. For technical reasons the action will not always have $\Theta$ as its IRS, but under mild conditions (which we state precisely after the definition) it will, and we think it is still worth stating the general construction.

\begin{defn}\label{defn: real Bernoulli shift}
Let $G$ be a countable, discrete group and let $\Theta\in \IRS(G)$. Let $X$ be a compact metrizable space and $\nu$ a Borel probability measure on $G$. Let
\[Y=\{(H,x)\in \Sub(G)\times X^{G}:x(hg)=x(g) \textnormal{ for all $g\in G,h\in H$}\}.\]
Observe that $Y$ is a closed subset of $\Sub(G)\times
X^{G}$. For $H\in \Sub(G)$, let $G/H$ be the space of right cosets of $H$ in $G$ and regard $\nu^{\otimes G/H}$ as a probability measure on $X^{G}$ which is supported on the $x\in X^{G}$ which are constant on right $H$-cosets.
Let $\mu_{\Theta}$ be the measure
\[\mu_{\Theta}=\int_{\Sub(G)}\delta_{H}\otimes \nu^{\otimes G/H}\,d\Theta(H).\]
 We sometimes denote $(Y,\mu_{\Theta})$ as $\Bern(X,\nu,\Theta)$.
 We let $G\actson X^{G}$ by
 \[(gx)(a)=x(g^{-1}a) \textnormal{ for all $g,a\in G,x\in X^{G}$.}\]
 Note that $G\actson Y$ by 
 \[g\cdot(H,x)=(gHg^{-1},gx).\]
 We call $G\actson (Y,\mu_{\Theta})$ the \emph{$\Theta$-Bernoulli action with base $(X,\nu)$.}
\end{defn}

Suppose that $\nu$ is not a dirac mass, and choose $b\in [0,1)$ so that $\nu(\{x\})\in [0,b]$ for $x\in X$.
Suppose that $H\in \Sub(G)$ and $F\subseteq G/H$ is finite with $H\in F$, then by Fubini-Tonelli
\begin{align*}
    \nu^{\otimes G/H}(\{x\in X^{G}:x(g_{1})&=x(g_{2})\textnormal{ for all $c_{1},c_{2}\in G/H$, and all $g_{1}\in c_{1},g_{2}\in c_{2}$}\})\\
    &\leq \nu^{\otimes G/H}(\{x\in X^{G}:x(g)=x(e)\textnormal{ for all $c\in F$, and all $g\in c$}\})\\
    &\leq b^{|F|}.
\end{align*}
Since $b<1$, and the above inequality is true for every finite $F\subseteq G$, we deduce that if either
\begin{itemize}
    \item $\nu$ is atomless, or
    \item $H$ is infinite index in $G$ a.s.,
\end{itemize}
then for $\mu_{\Theta}$-a.e. every $(H,x)$ with $H\ne G$ we have $\Stab((H,x))=H$. Note that $\Stab((G,x))$=G. So, under either of the above bulleted conditions, the action
$G\actson (Y,\mu_{\Theta})$ has IRS equal to $\Theta.$ 
This construction first appears in \cite[Proposition 13]{AGYIRS} (see \cite[Section 3.2-3.3]{CPIRS} for the locally compact case, as well as a proof of the fact that if $\Theta$ is ergodic then we can modify the above construction to get an ergodic action). See \cite{SewardSinai}, \cite[Section 5]{RobinTDWeakEquiv} for further applications of this construction.

The classical Bernoulli shift with base $(X,\nu)$ is the case when $\Theta=\delta_{\{1\}}$. It is of great importance in ergodic theory, via its connections to probability (it is the sample space for i.i.d. $X$-valued random variables $(\Upsilon_{g})_{g\in G}$). It also has many desirable properties such as being mixing (\cite[Section 2.5]{PetersenET} and \cite[Section 2.3]{KerrLiBook}), complete positive entropy \cite{RudolphMixing, KerrCPE}, Koopman representation being an infinite direct sum of the left regular \cite[Section 2.3]{KerrLiBook}, and being a free action when the acting group is infinite (proved above). It is also canonically associated to any group. We refer the reader to \cite[Section 4.9]{Walters}, \cite[Section 6.4 and 6.5]{PetersenET}, and \cite[Section 2.3]{KerrLiBook} for more details and information on the classical Bernoulli shift.
For our purposes, we will only need that the $\Theta$-Bernoulli shift retains a residue of freeness, in that under the above conditions it has $\Theta$ as its IRS.

We will need the notion of an orbit equivalence relation. 

\begin{defn}
A \emph{discrete, probability measuring preserving equivalence relation} is a tuple $(X,\nu,\cR)$ where $(X,\nu)$ is a standard probability space, $\cR\subseteq X\times X$ is Borel, and so that the following holds:
\begin{itemize}
    \item (equivalence relation) the relation $x\thicksim y$ given by $(x,y)\in \cR$ is an equivalence relation,
    \item (discreteness) for almost every $x\in X$, we have
    \[[x]_{\cR}=\{y\in X:(x,y)\in \cR\}\]
    is countable,
    \item (pmp) for every Borel $f\colon \cR\to [0,+\infty]$ we have 
    \[\int \sum_{y\in [x]_{\cR}}f(x,y)\,d\nu(x)=\int \sum_{y\in [x]_{\cR}}f(y,x)\,d\nu(x).\]
\end{itemize}
\end{defn}

The last item can be recast as follows: define a Borel measure $\overline{\nu}$ on $\cR$ by
\[\overline{\nu}(B)=\int_{X}|\{y\in [x]_{\cR}:(x,y)\in B\}|\,d\nu(x).\]
Then the map $(x,y)\mapsto (y,x)$ preserves the measure if and only if the last item holds. This implies, for example, that if  $\cR$ is a discrete, probability measure preserving equivalence relation on $(X,\nu)$ and if $f\in L^{1}(\cR)$, then
\[\int \sum_{y\in [x]_{\cR}}f(x,y)\,d\nu(x)=\int \sum_{y\in [x]_{\cR}}f(y,x)\,d\nu(x).\]

If $G$ is  a countable discrete group and $G\actson (X,\nu)$ is a probability measure preserving action, we then have a discrete, probability-measure relation given as the \emph{orbit equivalence relation}
\[\cR_{G,X}=\{(x,gx):x\in X,g\in G\}.\]
All discrete, probability measure-preserving equivalence relations arise this way \cite{FelMoore}.

As mentioned before, we will extend approximate homomorphisms to sofic approximations of equivalence relations. In order to define a sofic approximation of an equivalence relation, we use tracial von Neumann algebras.

\begin{defn}
Let $\cH$ be a Hilbert space. A unital $*$-subalgebra $M$ of $\mathbb{B}(\mathcal{H})$ is said to be a von Neumann algebra if it is closed in the weak operator topology given by the convergence  $T_n\to T$ if $\langle (T_n-T)v, w\rangle\to 0$ for all $v,w\in \mathcal{H}$. 
A \emph{projection} in $M$ is an element $p\in M$ with $p=p^{*}=p^{2}$. We let $\Proj(M)$ be the set of projections in $M$.
A normal \emph{homomorphism} between von Neumann algebras $M,N$ is a linear $\pi\colon M\to N$ which preserves products and adjoints and such that $\pi\big|_{\{x\in M:\|x\|\leq 1\}}$ is weak operator topology continuous. Such maps are automatically norm continuous \cite[Proposition 1.7 (e)]{ConwayOT}.
We say that $\pi$ is an \emph{isomorphism} if is bijective, it is then automatic that $\pi^{-1}$ is a normal homomorphism \cite[Proposition 46.6]{ConwayOT}.
A pair $(M,\tau)$ is a \emph{tracial von Neumann algebra} if $M$ is a von Neumann algebra and $\tau$ is a trace, meaning that $\tau: M\to \bC$  satisfies:
\begin{itemize}
    \item $\tau$ is linear,
    \item $\tau(x^{*}x)\geq 0$ and  $\tau(x^{*}x)>0$ if $x\ne 0$,
    \item $\tau(ba)=\tau(ab)$ for all $a,b\in M$,
    \item $\tau(1)=1$,
    \item $\tau\big|_{\{x\in M:\|x\|\leq 1\}}$ is weak operator topology continuous.
\end{itemize}
Given a Hilbert space $\cH$ and $E\subseteq B(\cH)$, we let $W^{*}(E)$ be the von Neumann algebra generated by $E$. 
\end{defn}
For a tracial von Neumann algebra $(M,\tau)$ and $x\in M$, we set $\|x\|_{2}=\tau(x^{*}x)^{1/2}$. 
A simple example of a tracial von Neumann algebra is the following. For $k\in \bN$, define $\tr\colon M_{k}(\bC)\to \bC$ by
\[\tr(\upsilon)=\frac{1}{k}\sum_{j=1}^{k}\upsilon_{jj}.\]
Then $(M_{k}(\bC),\tr)$ is a tracial von Neumann algebra.
The following is folklore, but we highlight it because we will use it explicitly. This can be proved, e.g. by following the discussion in Section 2 of \cite{BekkaOAsuperrigid}.
\begin{lemm} \label{lem: folklore ish}
Let $(M_{j},\tau_{j}),$ $j=1,2$ be tracial von Neumann algebras. Suppose that $N\subseteq M_{1}$, is a  weak operator topology dense $*$-subalgebra, and that $\pi\colon N\to M_{2}$ is a $*$-homomorphism with $\tau_{2}\circ \pi=\tau_{1}\big|_{N}$. Then $\pi$ extends uniquely to a trace-preserving, normal $*$-homomorphism from $M_{1}\to M_{2}$.

\end{lemm}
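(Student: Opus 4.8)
The plan is to reduce the statement to two standard facts about tracial von Neumann algebras: that a $*$-homomorphism which preserves the trace is automatically $\|\cdot\|_2$-isometric, and that a tracial von Neumann algebra is complete in $\|\cdot\|_2$ on bounded sets (equivalently, that the GNS Hilbert space carries the action and that bounded $\|\cdot\|_2$-limits stay in the algebra via Kaplansky density). Concretely, equip $M_1$ with the GNS construction from $\tau_1$, giving a Hilbert space $L^2(M_1,\tau_1)$ in which $N$ sits densely (in $\|\cdot\|_2$, since $N$ is weak-operator dense and $\tau_1$ is normal, so $N$ is $\|\cdot\|_2$-dense by Kaplansky density applied to bounded approximants), and similarly for $M_2$.

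First I would verify that $\pi$ is $\|\cdot\|_2$-isometric on $N$: for $x\in N$, $\|\pi(x)\|_{2}^{2}=\tau_2(\pi(x)^*\pi(x))=\tau_2(\pi(x^*x))=\tau_1(x^*x)=\|x\|_2^2$, using that $\pi$ is a $*$-homomorphism and $\tau_2\circ\pi=\tau_1|_N$. Hence $\pi$ induces an isometry $U\colon L^2(M_1,\tau_1)\to L^2(M_2,\tau_2)$ on the dense subspace $N$, extending to an isometry of Hilbert spaces. Next I would show $\pi$ is automatically norm-bounded by $1$ on $N$: this is the usual argument that a $*$-homomorphism between C${}^*$-algebras is contractive, but here one only has a $*$-algebra $N$, so instead I would use that for $x\in N$ with $\|x\|\le 1$ one has, for every $n$, $\|\pi(x^n)\|_2=\|x^n\|_2\le\|x\|^{n-1}\|x\|_2\le 1$ and combine with spectral-radius-type estimates; alternatively, and more cleanly, note that $x^*x$ and $\pi(x^*x)$ are self-adjoint with $\pi$ multiplicative, so $\pi$ maps positive elements of the form $y^*y$ to positive elements, whence $\pi$ is positive, and a positive unital map between $*$-algebras sitting in C${}^*$-algebras is contractive on self-adjoint elements; then $\|\pi(x)\|^2=\|\pi(x)^*\pi(x)\|=\|\pi(x^*x)\|\le\|x^*x\|=\|x\|^2$. (Unitality is not assumed but $\tau_2(\pi(1))=\tau_1(1)=1$ forces $\pi(1)$ to be a projection of trace $1$, hence $\pi(1)=1$ since $\tau_2$ is faithful.)

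Then I would define the extension $\widetilde\pi\colon M_1\to M_2$. Given $a\in M_1$ with $\|a\|\le 1$, use Kaplansky density to pick a net $(x_i)$ in $N$ with $\|x_i\|\le 1$ and $x_i\to a$ in $\|\cdot\|_2$ (equivalently strongly). Then $(\pi(x_i))$ is bounded by $1$ and Cauchy in $\|\cdot\|_2$ (since $\pi$ is $\|\cdot\|_2$-isometric), hence converges strongly to some $\widetilde\pi(a)\in M_2$ (using completeness of the unit ball of $M_2$ in the strong topology, i.e. that the strong limit of a bounded net lies in $M_2$). One checks this limit is independent of the approximating net, extend by scaling to all of $M_1$, and verify that $\widetilde\pi$ is linear, multiplicative, and $*$-preserving by approximating: products and adjoints are jointly strongly continuous on bounded sets, so the algebraic identities pass to the limit. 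Trace preservation is immediate from $\|\cdot\|_2$-continuity of $\tau_j$ on bounded sets: $\tau_2(\widetilde\pi(a))=\lim\tau_2(\pi(x_i))=\lim\tau_1(x_i)=\tau_1(a)$. Normality (weak-operator continuity on the unit ball) follows because $\widetilde\pi$ is $\|\cdot\|_2$-isometric on the unit ball and on bounded sets the weak operator topology and the $\|\cdot\|_2$-topology... well, one uses that $\widetilde\pi$ is $\|\cdot\|_2$-continuous and that on bounded sets of a tracial von Neumann algebra the $\sigma$-weak topology is induced by $\|\cdot\|_2$-continuous functionals, so $\widetilde\pi$ is $\sigma$-weakly continuous on the unit ball, which is the definition of normal. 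Uniqueness is clear since any two normal extensions agree on the $\|\cdot\|_2$-dense (equivalently strongly dense) subalgebra $N$ and are strongly continuous on bounded sets.

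The main obstacle, and the one point deserving care, is the passage from the Hilbert-space isometry $U$ to an actual $*$-homomorphism of von Neumann algebras: one must know that the strong limit of the bounded net $(\pi(x_i))$ genuinely lands in $M_2$ (not merely in $B(L^2(M_2,\tau_2))$) and that it does not depend on the chosen approximants. Both are handled by Kaplansky density together with the fact that the unit ball of a von Neumann algebra is strongly closed; making sure the approximating nets can be chosen bounded (Kaplansky) is exactly what prevents norm blow-up. Everything else is a routine ``pass to the limit in bounded nets'' argument, and this is the standard folklore proof alluded to in the statement; I would simply cite the relevant structure theory (GNS, Kaplansky density) rather than reprove it.
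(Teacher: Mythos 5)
The paper does not actually prove this lemma; it only points to the discussion in Section~2 of Bekka's paper, and your argument is precisely the standard folklore proof being alluded to there: trace-preservation makes $\pi$ a $\|\cdot\|_{2}$-isometry, Kaplansky density together with strong closedness of the unit ball of $M_{2}$ produces the extension on bounded sets, and uniqueness follows since normal maps agreeing on a weak operator topology dense subalgebra agree everywhere. So in outline your proposal is correct and is the intended argument.

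One step needs a correction. Your ``cleaner'' route to norm-contractivity of $\pi$ on $N$ --- that $\pi$ maps elements $y^{*}y$ to positive elements, hence is positive, hence contractive on self-adjoint elements --- does not work as stated, because $N$ is only a $*$-subalgebra, not norm closed: an element of $N$ that is positive in $M_{1}$ need not be of the form $y^{*}y$ with $y\in N$ (its square root generally lies outside $N$), so ``positive on squares from $N$'' does not yield order-preservation for the ambient C$^{*}$-order, which is exactly what the usual ``unital positive implies contractive'' argument requires. Your first route is the right one, but it must be run on $x^{*}x$ rather than on $x$: for non-normal $x$, bounds on $\|\pi(x)^{n}\|_{2}$ control only the spectral radius of $\pi(x)$, not its norm (think of nilpotents). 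For $y=x^{*}x$ one has $\tau_{2}\bigl(\pi(y)^{2n}\bigr)=\tau_{1}\bigl(y^{2n}\bigr)\le\|y\|^{2n}$, and since $\tau_{2}$ is faithful and normal, $\tau_{2}\bigl(\pi(y)^{2n}\bigr)^{1/2n}\to\|\pi(y)\|$, whence $\|\pi(x)\|^{2}=\|\pi(x^{*}x)\|\le\|x^{*}x\|=\|x\|^{2}$. With that in place the rest of your argument (bounded approximating nets via Kaplansky, well-definedness and multiplicativity by passing to limits on bounded sets, trace preservation by $\|\cdot\|_{2}$-continuity) goes through. For normality, rather than the somewhat delicate topology-matching you gesture at, it is cleaner to use that the extension $\widetilde{\pi}$ is positive and trace-preserving with $\tau_{2}$ faithful: if $a_{i}\nearrow a$ then $\widetilde{\pi}(a_{i})\nearrow b\le\widetilde{\pi}(a)$ for some $b$, and $\tau_{2}(b)=\lim\tau_{1}(a_{i})=\tau_{1}(a)=\tau_{2}(\widetilde{\pi}(a))$ forces $b=\widetilde{\pi}(a)$.
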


Given a discrete, probability measure-preserving equivalence relation $\cR$ on $(X,\nu)$, we let $[\cR]$ be the group of all bimeasurable bijections $\gamma\colon X_{0}\to Y_{0}$ where $X_{0},Y_{0}$ are conull subsets of $X$ and with $\gamma(x)\in [x]_{\cR}$ for almost every $x\in X$. As usual, we identify two such maps if they agree off a set of measure zero. The group $[\cR]$ is called the full group of $\cR$. We let $[[\cR]]$ be the set of all bimeasurable bijections $\gamma\colon B_{1}\to B_{2}$ (where $B_{1},B_{2}$ are measurable subsets of $X$) which satisfy that $\gamma(x)\in [x]_{\cR}$ for almost every $x\in B_{1}$. As usual, we identify two such maps if they agree off a set of measure zero. We usually use $\dom(\gamma),\ran(\gamma)$ for $B_{1},B_{2}$ above. We define maps $\vartheta\colon L^{\infty}(X,\nu)\to B(L^{2}(\cR,\overline{\nu}))$ and $\lambda\colon [[\cR]]\to \mathcal{U}(L^{2}(\cR,\overline{\nu}))$ by
\[(\vartheta(f)\xi)(x,y)=f(x)\xi(x,y),\]
\[(\lambda(\gamma)\xi)(x,y)=1_{\ran(\gamma)}(x)\xi(\gamma^{-1}(x),y).\]
We define the \emph{von Neumann algebra of the equivalence relation} to be
\[L(\cR)=W^{*}(\lambda([\cR])\cup \vartheta(L^{\infty}(X,\nu))).\]
Note that if $B\subseteq X$ is measurable, then we have an element $\id_{B}\in [[\cR]]$ with $\dom(\id_{B})=E=\ran(\id_{B})$ and $\id_{B}(x)=x$ for all $x\in B$. Moreover, $\lambda(\id_{B})=\vartheta(1_{B})$. Since simple functions are dense in $L^{\infty}(X,\nu)$, this implies that
\[L(\cR)=W^{*}(\lambda([[\cR]])).\]
The von Neumann algebra $L(\cR)$ is equipped with a trace
\[\tau(x)=\ip{x1_{\Lambda},1_{\Lambda}}\]
where $\Lambda=\{(x,x):x\in X\}$. We typically identify $L^{\infty}(X,\nu)$ and $[[\cR]]$ as subsets of $L(\cR)$ and do not make explicit reference to the maps $\lambda,\vartheta.$

Another example of a tracial von Neumann algebra is the ultraproduct of tracial von Neumann algebras. Let $\omega$ be a free ultrafilter on $\mathbb{N}$. Suppose $(N_k, \tau_k)$are tracial von Neumann algebras.  Denote the ultraproduct by $$\prod_{k\to \omega} (N_k,\tau_k)= \{(x_k)_{k\in \mathbb{N}}\ |\ \sup_{k} \|x_k\|< \infty\}/\{(x_k)| \lim_{k\to \omega} \|x_k\|_2=0\}. $$
If $(x_{k})_{k}\in \prod_{k}N_{k}$ with
 $\sup_{k}\|x_{k}\|<+\infty,$ we use $(x_{k})_{k\to\omega}$ for its image in $\prod_{k\to\omega}(N_{k},\tau_{k})$.
By the proof of \cite[Lemma A.9]{BrownOzawa2008}
the ultraproduct is a tracial von Neumann algebra and is equipped with a canonical trace $\tau((x_n)_{\omega})= \lim_{n\to \omega}\tau_n(x_n)$.

For a sequence of integers $d_{n}$, 
set
\[(\cM,\tau_{\omega})=\prod_{n\to\omega}(M_{d_{n}}(\bC),\tr).\]
We let
\[(L^{\infty}(\cL,u_{\omega}),u_{\omega})=\prod_{n\to\omega}(\ell^{\infty}(d_{n}),u_{d_{n}}),\]
and
\[\cS_{\omega}=\prod_{n\to\omega}(\Sym(d_{n}),d_{\Hamm}).\]
View $\ell^{\infty}(d_{n})\subseteq M_{d_{n}}(\bC)$ by identifying each function with the diagonal matrix whose entries  are 
$(f(1),\cdots,f(d_{n}))$,
and identify each permutation with its corresponding permutation matrix. In this way we can identify $L^{\infty}(\cL,u_{\omega})$ as a subalgebra of $\cM$ and $\cS_{\omega}$ as a subgroup of the unitary group of 
$\cM$. 
\begin{defn}

Let $(X,\nu,\cR)$ be a discrete, probability measure preserving equivalence relation. We say that $\cR$ is \emph{sofic} if there is a free ultrafilter $\omega$, a sequence of positive integers $(d_{n})_{n}$ and maps
\[\rho\colon L^{\infty}(X,\nu)\to L^{\infty}(\cL,u_{\omega}):=\prod_{n\to\omega}(\ell^{\infty}(d_{n}),u_{d_{n}}),\,\,\, \sigma\colon [\cR]\to \cS_{\omega}:=\prod_{n\to\omega}(\Sym(d_{n}),d_{\Hamm})\]
so that:
\begin{itemize}
    \item $\rho$ is a normal $*$-homomorphism,
    \item $\sigma$ is a homomorphism,
    \item $\tau_{\omega}(\rho(f)\sigma(\gamma))=\int_{\{x\in X:\gamma(x)=x\}}f\,d\nu$ for every $f\in L^{\infty}(X,\nu),\gamma\in [\cR]$,
    \item $\sigma(\gamma)\rho(f)\sigma(\gamma)^{-1}=\rho(f\circ \gamma^{-1})$ for every $f\in L^{\infty}(X,\nu),\gamma\in [\cR]$.
\end{itemize}

\end{defn}

Since $[\cR],L^{\infty}(X,\nu)$ are uncountable, this definition can be a bit unwieldy. We give a few equivalent definitions of soficity below for the readers convenience. Essentially all of this is either folklore or due to Elek-Lippner (see the proof \cite[Theorem 2]{ElekLip}) or P\u{a}unescu (\cite{PaunSofic}), we do not claim originality for these results.

\begin{prop} \label{prop: equivalent definition of a sofic rep}
Let $\cR$ be a discrete, probability measure-preserving relation over a standard probability space $(X,\nu)$. View $L^{\infty}(X,\nu)$ and $[[\cR]]$ as subsets of $L(\cR)$. 
\begin{enumerate}[(i)]
\item Let $\rho\colon L^{\infty}(X,\nu)\to L^{\infty}(\cL,u_{\omega})$, $\sigma\colon [\cR]\to \cS_{\omega}$ be a sofic approximation.  Then there is a trace-preserving $*$-homomorphism $\pi\colon L(\cR)\to \prod_{k\to\omega}(M_{k}(\bC),\tr)$ so that $\pi|_{L^{\infty}(X)}=\rho$, $\pi|_{[\cR]}=\sigma$. \label{item: the better defn}
\item Conversely, suppose that $\pi\colon L(\cR)\to \prod_{k\to\omega}(M_{k}(\bC),\tr)$ is a trace-preserving $*$-homomorphism with $\pi(L^{\infty}(X))\subseteq L^{\infty}(\cL,u_{\omega})$ and $\pi([\cR])\subseteq \cS_{\omega}$. Then the pair $(\rho,\sigma)$ given by $\rho=\pi|_{L^{\infty}(X,\nu)}$, $\sigma=\pi|_{[\cR]}$ is a sofic approximation. \label{item: better defn converse}
\item Let $D\subseteq L^{\infty}(X,\nu)$ be a subset which is closed under products,
and $G\subseteq [\cR]$ a countable subgroup. Suppose that $D$ is $G$-invariant, that $\Span(D)$ is weak$^{*}$-dense in $L^{\infty}(X)$ and that $Gx=[x]_{\cR}$ for almost every $x\in X$. Suppose that $\rho_{0}\colon D\to L^{\infty}(\cL,u_{\omega})$ and $\sigma_{0}\colon G\to \cS_{\omega}$ are such that:
\begin{itemize}
    \item $\rho_{0}(f_{1}f_{2})=\rho_{0}(f_{1})\rho_{0}(f_{2})$ for every $f_{1},f_{2}\in D$
    \item $\sigma_{0}(\gamma)\rho_{0}(f)\sigma_{0}(\gamma)^{-1}=\rho_{0}(f\circ \gamma^{-1})$ for every $f\in D,\gamma\in G$.
    \item $\tau_{\omega}(\rho_{0}(f)\sigma_{0}(\gamma))=\int_{\{x\in X:\gamma(x)=x\}}f\,d\nu$ for all $f\in D,\gamma\in G$.
\end{itemize}
Then there is a unique sofic approximation $(\rho,\sigma)$ of $\cR$ so that $\rho|_{D}=\rho_{0},$ $\sigma|_{G}=\sigma_{0}$.
\label{item: countable reduction sofic rep}
\item Let $\cA$ be an algebra of measurable sets in $X$, and let $G$ be a countable subgroup of $[\cR]$ with $Gx=[x]_{\cR}$ for almost every $x\in X$. Assume that $\cA$ is $G$-invariant and that the complete sigma-algebra generated by $\cA$ is the algebra of all $\nu$-measurable sets. Suppose that $\rho_{0}\colon \cA\to \Proj(L^{\infty}(\cL,u_{\omega}))$ and $\sigma_{0}\colon G\to \cS_{\omega}$ satisfies:
\begin{itemize}
    \item $\rho_{0}(B_{1}\cap B_{2})=\rho_{0}(B_{1})\rho_{0}(B_{2})$ and $u_{\omega}(\rho_{0}(B_{1}))=\nu(B_{1})$ for every $B_{1},B_{2}\in \cA$,
    \item $\sigma_{0}$ is a homomorphism and $\tau(\rho_{0}(B)\sigma_{0}(\gamma))=\nu(\{x\in B:\gamma(x)= x\})$ for every $\gamma\in G$, $B\in \cA$,
    \item $\sigma_{0}(\gamma)\rho_{0}(E)\sigma_{0}(\gamma)^{-1}=\rho_{0}(\gamma(B))$ for every $\gamma\in G,B\in\cA$.
\end{itemize}
Then there is a unique sofic approximation $\rho\colon L^{\infty}(X,\nu)\to L^{\infty}(\cL,u_{\omega})$, $\sigma\colon [\cR]\to S_{\omega}$ so that
$\rho|_{\cA}=\rho_{0}$, $\sigma|_{G}=\sigma_{0}$.
\label{item: sometimes people don't like addition}
\end{enumerate}

\end{prop}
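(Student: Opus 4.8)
The plan is to route all four statements through the von Neumann algebra $L(\cR)$ and Lemma~\ref{lem: folklore ish}. In each case I would build a trace preserving $*$-homomorphism from a weak operator topology dense $*$-subalgebra of $L(\cR)$ into $\cM=\prod_{k\to\omega}(M_{k}(\bC),\tr)$, extend it by Lemma~\ref{lem: folklore ish}, and then recover $\rho$ and $\sigma$ as its restrictions to $L^{\infty}(X,\nu)$ and $[\cR]$. The relevant dense subalgebra is
\[N=\Span\{\vartheta(f)\lambda(\gamma):f\in L^{\infty}(X,\nu),\ \gamma\in[\cR]\};\]
using the covariance relation $\lambda(\gamma)\vartheta(f)\lambda(\gamma)^{-1}=\vartheta(f\circ\gamma^{-1})$ one checks $N$ is closed under products and adjoints, and it is weak operator topology dense in $L(\cR)=W^{*}(\lambda([\cR])\cup\vartheta(L^{\infty}(X,\nu)))$.

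For part (i), I would define $\pi_{0}\colon N\to\cM$ by $\pi_{0}(\sum_{i}\vartheta(f_{i})\lambda(\gamma_{i}))=\sum_{i}\rho(f_{i})\sigma(\gamma_{i})$; multiplicativity and $*$-preservation of $\pi_{0}$ on representatives are forced by the sofic-approximation axioms (the covariance identity and $\sigma$ being a homomorphism). The crux is well-definedness, and the key identity is
\[\Big\|\sum_{i}\rho(f_{i})\sigma(\gamma_{i})\Big\|_{2}^{2}=\sum_{i,j}\int_{\Fix(\gamma_{i}^{-1}\gamma_{j})}(\overline{f_{i}}f_{j})\circ\gamma_{j}\,d\nu=\Big\|\sum_{i}\vartheta(f_{i})\lambda(\gamma_{i})\Big\|_{L^{2}(\cR,\overline{\nu})}^{2},\]
where the left equality uses the trace formula $\tau_{\omega}(\rho(g)\sigma(\eta))=\int_{\Fix(\eta)}g\,d\nu$ together with covariance and the trace property, and the right equality is the direct computation of $\|a\,1_{\Lambda}\|$ for $a=\sum_{i}\vartheta(f_{i})\lambda(\gamma_{i})$, using $\|a\|_{2}^{2}=\|a1_{\Lambda}\|_{L^{2}(\cR,\overline{\nu})}^{2}$. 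The same two computations give $\tau_{\omega}\circ\pi_{0}=\tau_{L(\cR)}\big|_{N}$, so Lemma~\ref{lem: folklore ish} extends $\pi_{0}$ to a trace preserving normal $*$-homomorphism $\pi\colon L(\cR)\to\cM$ with $\pi\big|_{L^{\infty}(X)}=\rho$ and $\pi\big|_{[\cR]}=\sigma$. Part (ii) is then the reverse read-off: with $\rho=\pi\big|_{L^{\infty}(X,\nu)}$ and $\sigma=\pi\big|_{[\cR]}$, normality of $\rho$ is inherited, $\sigma$ is a homomorphism because $\gamma\mapsto\lambda(\gamma)$ is, the trace identity is $\tau_{\omega}(\rho(f)\sigma(\gamma))=\tau_{L(\cR)}(\vartheta(f)\lambda(\gamma))=\int_{\Fix(\gamma)}f\,d\nu$ (evaluate $\vartheta(f)\lambda(\gamma)1_{\Lambda}$ on the diagonal), and covariance is the image under $\pi$ of the corresponding identity in $L(\cR)$.

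For part (iii) I would first, harmlessly, enlarge $D$ to the $*$-subalgebra it generates, extending $\rho_{0}$ (this step is automatic when $D$ consists of real valued functions, as in all our applications), so we may assume $D$ is a unital $*$-subalgebra. Put $N_{0}=\Span\{\vartheta(f)\lambda(\gamma):f\in D,\ \gamma\in G\}$. Since $D$ is closed under products and $G$-invariant, $N_{0}$ is a $*$-subalgebra of $L(\cR)$; since $\Span(D)$ is weak$^{*}$-dense in $L^{\infty}(X,\nu)$ and $Gx=[x]_{\cR}$ a.e.\ (so each $\lambda(\gamma)$, $\gamma\in[\cR]$, is a strong operator limit of sums $\sum_{n}\vartheta(1_{B_{n}})\lambda(g_{n})$ with $g_{n}\in G$), the subalgebra $N_{0}$ is weak operator topology dense in $L(\cR)$ (using Kaplansky density to approximate the projections $\vartheta(1_{B_{n}})$ by elements of $\vartheta(\Span D)$ of bounded norm). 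The computation of part (i), now fed by the three hypotheses on $(\rho_{0},\sigma_{0})$, shows $\sum_{i}\vartheta(f_{i})\lambda(\gamma_{i})\mapsto\sum_{i}\rho_{0}(f_{i})\sigma_{0}(\gamma_{i})$ is a well-defined trace preserving $*$-homomorphism $N_{0}\to\cM$, which extends by Lemma~\ref{lem: folklore ish} to a trace preserving normal $*$-homomorphism $\pi\colon L(\cR)\to\cM$. To conclude via part (ii) one must check $\pi(L^{\infty}(X,\nu))\subseteq L^{\infty}(\cL,u_{\omega})$ and $\pi([\cR])\subseteq\cS_{\omega}$. The first holds because $\pi$ is normal: $\pi(\vartheta(L^{\infty}(X,\nu)))=W^{*}(\pi(\vartheta(\Span D)))=W^{*}(\Span\rho_{0}(D))\subseteq L^{\infty}(\cL,u_{\omega})$. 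For the second, writing $X=\bigsqcup_{n}B_{n}$ with $\gamma\big|_{B_{n}}$ equal to some $g_{n}\in G$, the partial sums $v_{N}=\sum_{n\le N}\vartheta(1_{B_{n}})\lambda(g_{n})$ converge to $\lambda(\gamma)$ in $\|\cdot\|_{2}$, so $\pi(v_{N})\to\pi(\lambda(\gamma))$ in $\|\cdot\|_{2}$; each $\pi(v_{N})$ is represented at the finite level by a partial-permutation matrix, and since $\pi(\lambda(\gamma))$ is a unitary, a standard completion of partial permutations to permutations, together with the fact that $\cS_{\omega}$ is $\|\cdot\|_{2}$-closed in $\cM$, gives $\pi(\lambda(\gamma))\in\cS_{\omega}$. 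Thus $(\rho,\sigma):=(\pi\big|_{L^{\infty}},\pi\big|_{[\cR]})$ is a sofic approximation extending $(\rho_{0},\sigma_{0})$; uniqueness follows since any such pair extends, by part (i), to a normal $*$-homomorphism agreeing with $(\rho_{0},\sigma_{0})$ on the weak operator topology dense set $N_{0}$.

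Finally, part (iv) reduces to part (iii): extend $\rho_{0}\colon\cA\to\Proj(L^{\infty}(\cL,u_{\omega}))$ to the algebra $D$ of $\cA$-simple functions by $\widetilde{\rho}_{0}(\sum_{i}c_{i}1_{B_{i}})=\sum_{i}c_{i}\rho_{0}(B_{i})$; the product and measure conditions on $\cA$ make this well defined, multiplicative, and such that $u_{\omega}(\widetilde{\rho}_{0}(1_{B}))=\nu(B)$. Then $D$ is a $G$-invariant unital $*$-subalgebra with $\Span(D)$ weak$^{*}$-dense (the $\sigma$-algebra generated by $\cA$ being everything), and on indicators the three hypotheses of part (iii) for $(\widetilde{\rho}_{0},\sigma_{0})$ are precisely the three hypotheses on $(\rho_{0},\sigma_{0})$. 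I expect the main obstacle to be the well-definedness identity in part (i) — matching the trace formula and covariance relation against the concrete $L^{2}(\cR,\overline{\nu})$ computation — and, secondarily, the verification in part (iii) that the normal extension carries $[\cR]$ into $\cS_{\omega}$, which is where the partial-permutation completion (essentially the Elek--Lippner / P\u{a}unescu argument) enters.
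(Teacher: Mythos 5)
Your proposal is correct and follows essentially the same route as the paper: the well-definedness of $\pi_{0}$ on finitely supported sums via the $\|\cdot\|_{2}$-identity, extension by Lemma \ref{lem: folklore ish}, the orbit decomposition $\lambda(\gamma)=\sum_{g}\vartheta(1_{B_{g}})\lambda(g)$ to get weak operator density of $M_{0}$ in part (\ref{item: countable reduction sofic rep}), and the reduction of (\ref{item: sometimes people don't like addition}) to (\ref{item: countable reduction sofic rep}) via $\cA$-simple functions. You are in fact slightly more careful than the paper at one point: the explicit verification in (\ref{item: countable reduction sofic rep}) that the normal extension $\pi$ carries $[\cR]$ into $\cS_{\omega}$ (via completion of partial permutations), which the paper leaves implicit when it invokes item (\ref{item: better defn converse}).
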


\begin{proof}
Throughout, set
$(\cM,\tau_{\omega})=\prod_{k\to\omega}(M_{k}(\bC),\tr).$

(\ref{item: the better defn}): Let
\[M_{0}=\left\{\sum_{\gamma\in [\cR]}a(\gamma)\gamma:a\colon[\cR]\to L^{\infty}(X,\nu) \textnormal{ is finitely supported}\right\},\]
then $M_{0}$ is weak operator topology dense in $L(\cR)$ by definition. Moreover, for all $f_{1},\cdots,f_{n}\in L^{\infty}(X,\nu)$,$\gamma_{1},\cdots,\gamma_{n}\in [\cR]$ we have by the axioms of a sofic approximation:
\begin{align*}
  \left\|\sum_{j=1}^{n}\rho(f_{j})\sigma(\gamma_{j})\right\|_{2}^{2}=\sum_{i,j}\tau_{\omega}(\sigma(\gamma_{j})^{-1}\rho(\overline{f_{j}}f_{k})\sigma(\gamma_{k}))&=\sum_{i,j}\tau_{\omega}(\rho((\overline{f_{j}}f_{k})\circ \gamma_{j})\sigma(\gamma_{j}^{-1}\gamma_{k})) \\
  &=\sum_{i,j}\int_{\{x:\gamma_{j}^{-1}(\gamma_{k}(x))=x\}}(\overline{f_{j}}f_{k})\circ \gamma_{j}\,d\nu\\
&=\sum_{i,j}\tau([(\overline{f_{j}}f_{k})\circ \gamma_{j}\gamma_{j}^{-1}]\lambda(\gamma_{k}))\\
&=\sum_{i,j}\tau(\lambda(\gamma_{j})^{-1}\overline{f_{j}}f_{k}\gamma\lambda(\gamma_{k}))\\
&=\left\|\sum_{j=1}^{n}f_{j}\lambda(\gamma_{j})\right\|_{2}^{2}.
\end{align*}
Since $\|\cdot\|_{2}$ is a norm, the above calculation implies that $\sum_{j}\rho(f_{j})\sigma(\gamma_{j})=0$ if and only if $\sum_{j}f_{j}\lambda(\gamma_{j)}=0$. This implies that the map
$\pi_{0}\colon M_{0}\to \prod_{k\to\omega}(M_{k}(\bC),\tr)$
given by
\[\pi_{0}\left(\sum_{\gamma}f(\gamma)\lambda(\gamma)\right)=\sum_{\gamma}\rho(f(\gamma))\sigma(\gamma),\]
is a  well-defined linear map. It is direct to check from the definition of a sofic approximation that it is a trace-preserving $*$-homomorphism.
 Lemma \ref{lem: folklore ish} implies that $\pi_{0}$ has a unique extension to trace-preserving normal $*$-homomorphism $\pi$.

(\ref{item: better defn converse}): This is an exercise in understanding the definitions.

(\ref{item: countable reduction sofic rep}):
Let $A=\Span(D)$ so that $A$ is a weak operator topology dense $*$-subalgebra of  $L^{\infty}(X,\nu)$. 
Let
\[M_{0}=\left\{\sum_{g\in G}a(g)\lambda(g):a\in c_{c}(G,A)\right\},\]
then $M_{0}$ is $*$-subalgebra of $L(\cR)$.
As in (\ref{item: the better defn}) we 
 know there is a unique function
$\pi_{0}\colon M_{0}\to\cM$
satisfying
\[\pi_{0}\left(\sum_{g\in G}a(g)\lambda(g)\right)=\sum_{g\in G}\rho_{0}(a(g))\sigma_{0}(g) \textnormal{ for all $a\in c_{c}(G,A)$.}\]
Our hypothesis imply that $\pi_{0}$ is a trace-preserving $*$-homomorphism. Thus by Lemma \ref{lem: folklore ish} and items (\ref{item: the better defn}),(\ref{item: better defn converse}) it suffices to show  that
$M_{0}$ is weak$^{*}$-dense in $L(\cR)$. Let $M$ be the weak operator topology closure of $M_{0}$. Then
 \[M\supseteq \overline{A}^{weak^{*}}=L^{\infty}(X,\nu).\]
Since $Gx=[x]_{\cR}$ for almost every $x\in X$,  given $\gamma\in [[\cR]]$ we may find (not necessarily unique) disjoint sets $(B_{g})_{g\in G}$ so that $B_{g}\subseteq\{x\in \dom(\gamma):\gamma(x)=gx\}$ and with
\[\nu\left(\dom(\gamma)\Delta \bigsqcup_{g\in G}B_{g}\right)=0.\]
We leave it as an exercise to check that
\[\lambda(\gamma)=\sum_{g\in G}1_{B_{g^{-1}}}\lambda(g)\]
with the sum converging in the strong operator topology. Since we have already shown that $L^{\infty}(X,\nu)\subseteq M$, it follows that $[[\cR]]\subseteq M$.
Hence
\[L(\cR)=W^{*}([[\cR]])\subseteq M.\]

(\ref{item: sometimes people don't like addition}):
Let $D=\{1_{E}:E\in \cA\}$, then $\Span(D)$ is weak$^{*}$-dense in $L^{\infty}(X)$ since $\cA$ is generating. Now apply (\ref{item: countable reduction sofic rep}).

\end{proof}

Because Proposition \ref{prop: equivalent definition of a sofic rep} gives several equivalent definitions of soficity, we will often use the term \emph{sofic approximation} to any one of the kinds of maps in each item of this proposition. For example, a map $\pi\colon L(\cR)\to \prod_{k\to\omega}(M_{k}(\bC),\tau)$ satisfying the hypotheses of (\ref{item: the better defn}) will be called a sofic approximation.

Additionally, item (\ref{item: sometimes people don't like addition}) suggest a sequential version of a sofic approximation of an equivalence relation. Namely, we can consider sequences $\rho_{n}\colon \cA\to \mathcal{P}(\{1,\cdots,d_{n}\})$ and $\sigma_{n}\colon G\to \Sym(d_{n})$ so that:
\begin{itemize}
    \item $\sigma_{n}$ is an asymptotic homomorphism,
    \item $u_{d_{n}}(\rho_{n}(B_{1}\cap B_{2})\Delta (\rho_{n}(B_{1})\cap \rho_{n}(B_{2})))\to_{n\to\infty}0,$ for all $B_{1},B_{2}\in \cA$,
    \item $u_{d_{n}}(\{j\in \rho_{n}(B):\sigma_{n}(g)(j)=j\})\to_{n\to\infty} \nu(\{x\in E:gx=x\})$ for all $B\in \cA$, $g\in G$,
    \item $u_{d_{n}}(\rho_{n}(gB)\Delta(\sigma_{n}(g)\rho_{n}(B)\sigma_{n}(g)^{-1}))\to_{n\to\infty}0$ for all $g\in G$, $B\in \cA$.
\end{itemize}
This is sometimes taken as the definition of soficity. Our definition has the advantage of being canonical and not requiring a choice of $G,\cA$. However, this alternate definition is typically how one would check soficity of relations in specific examples, whereas ours is more abstract. Similarly, item (\ref{item: countable reduction sofic rep}), suggests a different definition of soficity. One could require a sequence of maps $\rho_{n}\colon D\to \ell^{\infty}(d_{n})$, $\sigma_{n}\colon G\to \Sym(d_{n})$ so that
\begin{itemize}
    \item $\|\rho_{n}(f_{1}f_{2})-\rho_{n}(f_{1})\rho_{n}(f_{2})\|_{2}\to_{n\to\infty}0$ for all $f_{1},f_{2}\in D$,
    \item $\sigma_{n}$ is an asymptotic homomorphism,
    \item $\frac{1}{d_{n}}\sum_{j:\sigma_{n}(g)(j)=j}\rho_{n}(f)(j)\to_{n\to\infty}\int_{\{x\in X:gx=x\}}f\,d\nu$ for all $f\in D,g\in G$.
    \item $\|\rho_{n}(f)\circ \sigma_{n}(g)^{-1}-\rho_{n}(\alpha_{g}(f))\|_{2}\to_{n\to\infty} 0$ for every $f\in D$,$g\in G$.
\end{itemize}
Here the $\|\cdot\|_{2}$-norms are with respect to the uniform measure and $\alpha_{g}(f)(x)=f(g^{-1}x)$.
We will refer to either of these as a sofic approximation sequence.

We now rephrase Theorem \ref{thm: main theorem intro}.
We use $m$ for the Lebesgue measure on $[0,1]$.

\begin{thm}[(Theorem \ref{thm: main theorem intro})]\label{thm: I said the real Bernoulli shift}
 Let $G$ be countable, discrete group and let $\sigma_{n}\colon G\to \Sym(d_{n})$ be a sequence of asymptotic homomorphisms. Fix $\omega\in\beta\bN\setminus \bN$, and set $\Theta=\IRS(\sigma_{\omega})$. Let $\cR$ be the orbit equivalence relation of the $\Theta$-Bernoulli action with base $([0,1],m)$. Let $\Xi\colon G\to [\cR]$ be given by $\Xi(g)(x)=gx$. Then there is a sofic approximation $(\rho,\widehat{\sigma})$ of $\cR$  so that $\widehat{\sigma}\circ \Xi=\sigma_{\omega}$.
\end{thm}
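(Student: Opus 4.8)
The plan is to construct the sofic approximation of $\cR$ directly at the finitary level, using the data of the approximate homomorphisms $\sigma_n$ together with i.i.d.\ $[0,1]$-valued labels, and then invoke Proposition \ref{prop: equivalent definition of a sofic rep}(\ref{item: sometimes people don't like addition}) (or the sequential reformulation following it) to promote this to an honest sofic approximation of $\cR$. Concretely, I would pick a countable dense subgroup $G_0 \le [\cR]$ and a countable generating $G_0$-invariant algebra $\cA$ of measurable sets of $Y = \Bern([0,1],m,\Theta)$; the natural choice for $G_0$ is generated by the image of $\Xi(G)$ together with enough partial isomorphisms to make orbits match, and for $\cA$ the algebra generated by cylinder sets depending on finitely many coordinates of $X^G$ together with the $\Sub(G)$-coordinate. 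Since $\Theta = \lim_{n\to\omega}\Theta_n$ with $\Theta_n = (S_{\sigma_n})_*(u_{d_n})$, the point $j \in [d_n]$ should be thought of as carrying an approximate subgroup $S_{\sigma_n}(j) \in \{0,1\}^G$, and the orbit equivalence relation structure on the finite set should be built so that $j \sim \sigma_n(g)(j)$.

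The key construction: enlarge each $[d_n]$ to $[d_n] \times \Lambda_n$ where $\Lambda_n$ is an auxiliary finite set on which we place independent uniform $[0,1]$-valued labels, in such a way that points in the same approximate $\sigma_n$-orbit receive labels that are constant along the approximate $H$-cosets dictated by $S_{\sigma_n}(j)$ — this is exactly the finitary avatar of the measure $\mu_\Theta = \int_{\Sub(G)} \delta_H \otimes m^{\otimes G/H}\, d\Theta(H)$. This is the step modeled on \cite[Proposition 7.1]{ElekLip}. One then defines $\widehat{\sigma}_n(g)$ on $[d_n]\times\Lambda_n$ by acting via $\sigma_n(g)$ on the first coordinate and by the coordinate shift $x \mapsto (a \mapsto x(g^{-1}a))$ on the labels, and defines $\rho_n$ on cylinder sets of $\cA$ by reading off the labels. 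I would check, using \eqref{eqn:almost IRS invariance} and the law of large numbers (so that empirical distributions of the i.i.d.\ labels over $\Lambda_n$ converge to the product measure $m^{\otimes G/H}$), that the three/four asymptotic conditions in the sequential definition of soficity hold: the multiplicativity of $\rho_n$ on intersections, the trace condition $u_{d_n}(\{j \in \rho_n(B) : \widehat\sigma_n(g)(j) = j\}) \to \nu(\{x \in B : gx = x\})$, and the equivariance $\rho_n(gB) \approx \widehat\sigma_n(g)\rho_n(B)\widehat\sigma_n(g)^{-1}$. The trace/fixed-point condition is where the IRS enters decisively: the asymptotic proportion of $j$ fixed by $\widehat\sigma_n(g)$ is governed by the $\Theta_n$-measure of $\{H : g \in H\}$ combined with the label being shift-invariant, which is exactly $\mu_\Theta(\Fix(g))$.

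Finally, I would pass to the ultraproduct: the maps $\rho_n, \widehat\sigma_n$ assemble into $\rho\colon \cA \to \Proj(L^\infty(\cL,u_\omega))$ and $\sigma_0\colon G_0 \to \cS_\omega$ satisfying the hypotheses of Proposition \ref{prop: equivalent definition of a sofic rep}(\ref{item: sometimes people don't like addition}), yielding a genuine sofic approximation $(\rho,\widehat\sigma)$ of $\cR$; by construction $\widehat\sigma \circ \Xi$ restricted to $G$ is the ultraproduct of $\sigma_n$, i.e.\ $\widehat\sigma \circ \Xi = \sigma_\omega$, as required. I expect the main obstacle to be the construction of the auxiliary labels $\Lambda_n$ with the correct joint distribution: one needs the labels to be (asymptotically) constant along the coset structure coming from $S_{\sigma_n}(j)$ while being (asymptotically) independent across distinct cosets, and since $S_{\sigma_n}(j)$ is only an \emph{approximate} subgroup for most $j$ (not an exact one), some care is needed to define the coset partition on a large subset of $[d_n]$ and to control the error terms uniformly in the finite set $F \subseteq G$ under consideration. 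A secondary technical point is verifying that the full group $[\cR]$ — not merely $\Xi(G)$ — is captured, which is handled abstractly by Proposition \ref{prop: equivalent definition of a sofic rep} once $G_0$ is chosen with $G_0 x = [x]_{\cR}$ a.e.
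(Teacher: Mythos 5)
Your high-level strategy (i.i.d.\ $[0,1]$-labels on a finite model, verify the sequential soficity conditions, pass to the ultraproduct via Proposition \ref{prop: equivalent definition of a sofic rep}) is in the right circle of ideas, but the central construction as you describe it does not prove the stated theorem. You propose to enlarge $[d_{n}]$ to $[d_{n}]\times\Lambda_{n}$ and let $\widehat{\sigma}_{n}(g)$ act on this larger set. The conclusion, however, demands $\widehat{\sigma}\circ\Xi=\sigma_{\omega}$ as an identity in $\prod_{n\to\omega}(\Sym(d_{n}),d_{\Hamm})$ — the sofic approximation of $\cR$ must restrict to the \emph{original} permutations on the \emph{original} sets $[d_{n}]$. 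With a nontrivial $\Lambda_{n}$ you can at best produce an extension of the amplification $\sigma_{n}^{\oplus|\Lambda_{n}|}$, which is a strictly weaker statement and not enough for the applications (the conjugacy results conjugate $\sigma_{\omega}$ itself). Relatedly, the obstacle you flag as the main difficulty — arranging labels that are constant along the approximate cosets of $S_{\sigma_{n}}(j)$ and independent across cosets — is a symptom of trying to engineer the coset structure by hand, and it is not clear how to carry it out uniformly over all finite $F\subseteq G$ when $S_{\sigma_{n}}(j)$ is only approximately a subgroup.

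The missing idea that resolves both problems at once is to put a single i.i.d.\ uniform label $x(j)$ on each point $j\in[d_{n}]$ (so $x\in[0,1]^{d_{n}}$, with no auxiliary set) and to define $\phi_{x}(j)\in(\{0,1\}\times[0,1])^{G}$ by reading off, at coordinate $g$, the pair $(1_{\{j\}}(\sigma_{n}(g)(j)),\,x(\sigma_{n}(g)^{-1}(j)))$. Because the label at coordinate $g$ is the label of the point $\sigma_{n}(g)^{-1}(j)$, two group elements lying in the same right coset of the approximate stabilizer of $j$ automatically read the same label, while distinct cosets land on distinct points and hence read independent labels; this reproduces $\int\delta_{H}\otimes m^{\otimes G/H}\,d\Theta(H)$ on cylinder functions with no engineering needed, and the permutations remain exactly $\sigma_{n}$. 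In place of the law of large numbers over $\Lambda_{n}$, one then needs a second-moment (variance) estimate showing that $(\phi_{x})_{*}(u_{d_{n}})(f)$ concentrates around its mean over $x$ for each test function $f$, so that a diagonal argument over a countable dense, $G$-invariant, multiplicatively closed family of test functions produces a single good labeling $x_{n}$ for each $n$; the resulting $\rho_{0}(f)=(f\circ\phi_{x_{n}})_{n\to\omega}$ together with $\sigma_{\omega}$ then feeds into Proposition \ref{prop: equivalent definition of a sofic rep}(\ref{item: countable reduction sofic rep}) (here $\Xi(G)$ alone already generates the orbits, so no additional partial isomorphisms are needed). One further point your sketch glosses over: to identify $\tau_{\omega}(\rho_{0}(f)\sigma_{\omega}(g))$ with $\int_{\{y:\,gy=y\}}f\,d\mu_{\Theta}$ you must check that for $\mu_{\Theta}$-a.e.\ $(H,\xi)$ the stabilizer of $(H,\xi)$ is exactly $H$, which uses atomlessness of Lebesgue measure.
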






\section{The proof of Theorem \ref{thm: I said the real Bernoulli shift}}\label{sec: real Bern proof}

We now proceed to prove Theorem \ref{thm: I said the real Bernoulli shift}.
We remark that one can use \cite[Theorem 1.5]{RobinTDWeakEquiv} to prove Theorem \ref{thm: I said the real Bernoulli shift}. The essential idea behind such a proof is that given a sequence of approximate homomorphisms with IRS $\Theta$, the induced action on the Loeb measure space $G\actson (\cL,u_{\omega})$ also has IRS $\Theta$. One can use the L\"{o}wenheim–Skolem theorem in continuous model theory to build an action on a standard probability space which is a factor of this action and which still has IRS $\Theta$. Being an action on a standard probability space, such a factor is weakly contained in a $\Theta$-Bernoulli shift  by \cite[Theorem 1.5]{RobinTDWeakEquiv}. This implies that the $\Theta$-Bernoulli shift action $G\actson (X_{\Theta},\mu_{\Theta})$ with base $[0,1]$ (equipped with Lebesgue measure)  is weakly contained in $G\actson (\cL,u_{\omega})$.  Since $G\actson (\cL,u_{\omega})$ is an ultraproduct action, if $G\actson (X_{\Theta},\mu_{\Theta})$ is weakly contained in $G\actson (\cL,u_{\omega})$ it must actually be a factor of this action. Putting this altogether shows that any sequences of approximate homomorphisms extends to a sofic approximation of the $\Theta$-Bernoulli shift.

For the sake of concreteness, we have instead elected to give a direct probabilistic argument for the proof of Theorem \ref{thm: I said the real Bernoulli shift}.
The following is the main technical probabilistic lemma we need and is inspired by the proof of \cite[Theorem 8.1]{Bow}. Bowen's purpose in \cite{Bow} for such a result was to prove that the sofic entropy of Bernoulli shifts is the Shannon entropy of the base space. In this regard our proof should be compared with \cite[Theorem 9.1]{SewardSinai} which also says, in some sense, that the $\Theta$-Bernoulli shift with base $(X,\nu)$ is the ``largest entropy" action whose IRS is $\Theta$ and which is generated by the translates of an $X$-valued random variable with distribution $\nu$.

For ease of notation, if $X$ is a compact, metrizable space, $\nu\in \Prob(X)$, and $f\colon X\to \bC$ is bounded and Borel we often use $\nu(f)$ for $\int f\,d\nu$. 
Suppose $I$ is a set,  and $(X_{i})_{i\in I}$ are compact Hausdorff spaces. If $E\subseteq I$ is finite, and $(f_{i})_{i\in E}\in \prod_{i\in E}C(X_{i}),$ we define $\bigotimes_{i\in E}f_{i}\in C\left(\prod_{i\in I}X_{i}\right)$ by
\[\left(\bigotimes_{i\in E}f_{i}\right)(x)=\prod_{i\in E}f_{i}(x_{i}), \textnormal{ if $x=(x_{i})_{i\in I}\in \prod_{i\in I}X_{i}.$}\]

For a finite $F\subseteq G$ we define $\cE_{F}\in C(\{0,1\}^{G}$) by
\[\cE_{F}(a)=\prod_{g\in F}a(g).\]
Viewing $\Sub(G)\subseteq\{0,1\}^{G}$ we have $\cE_{F}\big|_{\Sub(G)}$ is the indicator function of $\{H:F\subseteq H\}$. If $Y$ is a compact, metrizable space we often slightly abuse notation and regard $\cE_{F}\in C(\{0,1\}^{G}\times Y)$ via the embedding $C(\{0,1\}^{G})\to C(\{0,1\}^{G}\times Y)$ given by
\[f\mapsto ((a,y)\mapsto f(a)).\]

\begin{lemm}\label{lem:building the embedding via prob}
Let $\sigma_{n}\colon G\to \Sym(d_{n})$ be a sequence of asymptotic homomorphisms and $\omega\in \beta\bN\setminus\bN$. Set $\Theta=\IRS(\sigma_{\omega})$. Let $\mu_{\Theta}$ be the $\Theta$-Bernoulli measure on $\Sub(G)\times [0,1]^{G}$ as in Definition \ref{defn: real Bernoulli shift}. View $\Sub(G)\times [0,1]^{G}\subseteq (\{0,1\}\times [0,1])^{G}$ by identifying each subgroup with its indicator function.
For $x\in [0,1]^{d_{n}}$, define $\phi_{x}\colon [d_{n}]\to (\{0,1\}\times [0,1])^{G}$ by
\[\phi_{x}(j)(g)=(1_{\{j\}}(\sigma_{n}(g)(j)),x(\sigma_{n}(g)^{-1}(j))).\]
Then:
\begin{enumerate}[(i)]
\item \label{item: on average wk* convergence}
in the weak$^{*}$-topology
\[\lim_{n\to\omega}\int_{[0,1]^{d_{n}}}(\phi_{x})_{*}(u_{d_{n}})dx=\mu_{\Theta}.\]
\item \label{item:variance argument} Given $f\in C(\{0,1\}^{G}\times [0,1]^{G})$, set $J_{n,f}=\int_{[0,1]^{d_{n}}}(\phi_{x})_{*}(u_{d_{n}})(f)\,dx$. Then:
\[\lim_{n\to\infty}\int_{[0,1]^{d_{n}}}|(\phi_{x})_{*}(u_{d_{n}})(f)-J_{n,f}|^{2}\,dx=0.\]
\item \label{item:asymptotic equivariance}
For $g\in G$, define $\alpha_{g}\in B(C((\{0,1\}\times [0,1])^{G}))$ by $(\alpha_{g}(f))(H,x)=f(g^{-1}Hg,g^{-1}x)$. Then for any $f\in C(\{0,1\}\times [0,1])^{G})$, $g\in G:$
\[\lim_{n\to\infty}\sup_{x\in [0,1]^{d_{n}}}\|f\circ \phi_{x}\circ \sigma_{n}(g)^{-1}-\alpha_{g}(f)\circ \phi_{x}\|_{\ell^{2}(u_{d_{n}})}=0.\]
\end{enumerate}

\end{lemm}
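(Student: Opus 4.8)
The three parts are largely independent computations in terms of the joint distribution of the random variables $\phi_x(j)(g) = (1_{\{j\}}(\sigma_n(g)(j)), x(\sigma_n(g)^{-1}(j)))$ as $j$ ranges uniformly over $[d_n]$ and $x$ ranges uniformly over $[0,1]^{d_n}$ (i.e. the $x(i)$ are i.i.d. uniform on $[0,1]$). The key structural observation, which I would establish first, is that the first coordinate $1_{\{j\}}(\sigma_n(g)(j))$ is a deterministic function of $j$ and $g$ (it records whether $j$ is a fixed point of $\sigma_n(g)$), while the second coordinate $x(\sigma_n(g)^{-1}(j))$ only depends on $x$ through the value of $x$ at the point $\sigma_n(g)^{-1}(j)$; so two coordinates $g,h$ at the same $j$ involve the same $x$-value exactly when $\sigma_n(g)^{-1}(j) = \sigma_n(h)^{-1}(j)$.

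For part \eqref{item: on average wk* convergence}, I would test against cylinder functions, which by Stone--Weierstrass suffice. For a finite $F\subseteq G$ and a cylinder function depending on coordinates in $F$, I would use the asymptotic-homomorphism property to find (as in the proof of the IRS lemma) a set $\Omega_n\subseteq[d_n]$ of asymptotically full measure on which $\sigma_n$ behaves like a genuine action for all words of bounded length in $F\cup F^{-1}$. On $\Omega_n$, the first coordinates $(1_{\{j\}}(\sigma_n(g)(j)))_{g\in F}$ record precisely the ``stabilizer pattern'' $\{g\in F : \sigma_n(g)(j)=j\}$, whose pushforward under $u_{d_n}$ converges along $\omega$ to the $F$-marginal of $\Theta$ by definition of $\IRS(\sigma_\omega)$; and conditioned on $j\in\Omega_n$ with a given first-coordinate pattern, the distribution of the second coordinates $(x(\sigma_n(g)^{-1}(j)))_{g\in F}$ is exactly: a collection of i.i.d. uniform $[0,1]$ variables indexed by the distinct values $\{\sigma_n(g)^{-1}(j): g\in F\}$, with coordinates $g,h$ equal iff $\sigma_n(g)^{-1}j=\sigma_n(h)^{-1}j$, which on $\Omega_n$ happens iff $gh^{-1}$ lies in the stabilizer pattern --- i.e. iff $g,h$ lie in the same coset of the ``subgroup'' $H$ recorded by the pattern. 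This is precisely the description of $\nu^{\otimes G/H}=m^{\otimes G/H}$ in Definition \ref{defn: real Bernoulli shift}. Integrating over $x$ and then over $j$ and taking $\lim_{n\to\omega}$ gives $\mu_\Theta$.

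For part \eqref{item:variance argument}, this is a second-moment / variance estimate: expanding $\int |(\phi_x)_*(u_{d_n})(f) - J_{n,f}|^2\,dx$ gives $\frac{1}{d_n^2}\sum_{j,k}[\mathbb{E}_x(\overline{f(\phi_x(j))}f(\phi_x(k))) - \mathbb{E}_x\overline{f(\phi_x(j))}\,\mathbb{E}_x f(\phi_x(k))]$, and the bracketed covariance vanishes whenever $j$ and $k$ use disjoint sets of $x$-coordinates, i.e. whenever $\{\sigma_n(g)^{-1}(j):g\in F\}\cap\{\sigma_n(g)^{-1}(k):g\in F\}=\varnothing$ (taking $f$ cylindrical over $F$, which suffices by density and a uniform bound $\|f\|_\infty$). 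For fixed $j$, the number of ``bad'' $k$ that share a coordinate is at most $|F|^2$ (for each pair $g,h$, the equation $\sigma_n(g)^{-1}(k)=\sigma_n(h)^{-1}(j)$ has exactly one solution $k$), so the bad pairs contribute at most $\frac{|F|^2\|f\|_\infty^2 \cdot 4}{d_n}\to 0$. I expect the only subtlety here is reducing to cylinder $f$ with control of the $\ell^\infty$ norm, which is routine via a $3\varepsilon$ argument.

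For part \eqref{item:asymptotic equivariance}, again reduce to cylinder $f$ over a finite $F$. The point is that $\phi_x(\sigma_n(g)^{-1}(j))(h) = (1_{\{\sigma_n(g)^{-1}j\}}(\sigma_n(h)\sigma_n(g)^{-1}j),\, x(\sigma_n(h)^{-1}\sigma_n(g)^{-1}j))$ while $(\alpha_g f)\circ\phi_x$ evaluated at $j$ reads off $\phi_x(j)$ at coordinate $g^{-1}hg$ together with the conjugation $g^{-1}Hg$; on the asymptotically-full set $\Omega_n$ adapted to $F\cup\{g\}$ (and products of bounded length), $\sigma_n(h)\sigma_n(g)^{-1} = \sigma_n(g)^{-1}\sigma_n(g h g^{-1})$ acts the same as the genuine action, so the two expressions agree pointwise on $\Omega_n$; since $\|f\circ\phi_x\circ\sigma_n(g)^{-1} - \alpha_g(f)\circ\phi_x\|_{\ell^2(u_{d_n})}^2 \le 4\|f\|_\infty^2\, u_{d_n}([d_n]\setminus\Omega_n)\to 0$ uniformly in $x$, we are done. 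The main obstacle across all three parts is bookkeeping the identification of the stabilizer pattern of $\sigma_n$ at a typical $j$ with an element of $\Sub(G)$ and matching the conditional law of the $x$-coordinates to the fibered Bernoulli measure $m^{\otimes G/H}$; once that dictionary is set up cleanly (ideally as a standalone sub-claim used in all three parts), the estimates are standard.
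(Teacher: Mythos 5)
Your proposal is correct and follows essentially the same route as the paper's proof: reduction to cylinder/product test functions via Stone--Weierstrass, restriction to a good set $\Omega_{n}$ where $\sigma_{n}$ is multiplicative on bounded words so that the conditional law of the $x$-coordinates at a site $j$ matches $m^{\otimes G/H}$ for the stabilizer pattern $H$ of $j$, the second-moment estimate with the $O(|F|^{2}/d_{n})$ count of overlapping pairs for (ii), and the asymptotic homomorphism property for the equivariance in (iii). The only cosmetic differences are that the paper organizes the part (i) computation through an explicit continuous function $\Phi_{f}$ on $\{0,1\}^{G}$ (with a fractional-power bookkeeping device) so that weak$^{*}$-convergence of $\Theta_{n}$ can be invoked directly, and handles (iii) by uniform continuity rather than density of cylinder functions; neither changes the substance.
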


\begin{proof}
Throughout, we use $G/H$ for the space of right cosets of $H$ in $G$. For $F\subseteq G$ finite, we set
\[\varsigma_{F,n}=\bigcap_{g\in F}\{j:\sigma_{n}(g)(j)=j\}\]
(\ref{item: on average wk* convergence}):
By the Riesz representation theorem, we can define $\eta\in \Prob((\{0,1\}\times [0,1])^{G})$ by
\[\eta(f)=\lim_{n\to\omega}\int_{[0,1]^{d_{n}}}(\phi_{x})_{*}(u_{d_{n}})(f)dx \textnormal{ for all $f\in C((\{0,1\}\times [0,1])^{G})$}.\]
 Let
\[\digamma=\left\{\left(\bigotimes_{g\in E}f_{g}\right)\cE_{F}:E,F\subseteq G
 \textnormal{ are finite}, (f_{g})_{g\in E}\in C([0,1])^{E},f_{g}\geq 0 \textnormal{ for all $g\in E$ }\right\}.\]
 By Stone-Weierstrass, $\Span(\digamma)$ is norm dense in $C((\{0,1\}\times [0,1])^{G})$, and by the Riesz representation theorem to show that $\eta=\mu_{\Theta}$ it suffices to show that they have the same integral against any element of $\digamma$. Fix $f=\left(\bigotimes_{g\in E}f_{g}\right)\cE_{F}\in \digamma$.
Let $\Omega_{n}$ be the set of $j\in [d_{n}]$ so that for all $1\leq k\leq (2023)!$ and for all $g_{1},\cdots,g_{k}\in E$ and all $s_{1},\cdots,s_{k}\in \{-1,1\}$ we have that
\[\sigma_{n}(g_{1}^{s_{1}}\cdots g_{k}^{s_{k}})(j)=\sigma_{n}(g_{1})^{s_{1}}\cdots \sigma_{n}(g_{k})^{s_{k}}(j).\]
Since $\sigma_{n}$ is an approximate homomorphism $u_{d_{n}}(\Omega_{n})\to 1$. 
Hence:
\begin{align*}
 \int_{[0,1]^{d_{n}}}(\phi_{x})_{*}(u_{d_{n}})(f)\,dx&=\frac{1}{d_{n}}\sum_{j\in \varsigma_{F,n}}\int_{[0,1]^{d_{n}}}\prod_{g\in E}f_{g}(x(\sigma_{n}(g)^{-1}(j)))\,dx\\
 &=u_{d_{n}}(\Omega_{n}^{c})\frac{1}{|\Omega_{n}|^{c}}\sum_{j\in \varsigma_{F,n}\setminus \Omega_{n}}\int_{[0,1]^{d_{n}}}\prod_{g\in E}f_{g}(x(\sigma_{n}(g)^{-1}(j)))\,dx\\
 &+u_{d_{n}}(\Omega_{n})\frac{1}{|\Omega_{n}|}\sum_{j\in \varsigma_{F,n}\cap \Omega_{n}}\int_{[0,1]^{d_{n}}}\prod_{g\in E}f_{g}(x(\sigma_{n}(g)^{-1}(j)))\,dx.
\end{align*}
Since $u_{d_{n}}(\Omega_{n})\to 1$, and the $f_{g}$ are bounded, the first term tends to zero. It thus remains to analyze the second term. For $j\in\Omega_{n}$, 
\begin{align*}
   &\int_{[0,1]^{d_{n}}}\prod_{g\in E}f_{g}(x(\sigma_{n}(g)^{-1}(j)))\,dx=\prod_{s\in \sigma_{n}(E^{-1})(j)}\int_{0}^{1}\prod_{g\in E:\sigma_{n}(g^{-1})(j)=s}f_{g}(x)\,dx \\
   &=\prod_{g\in E}\left(\int_{0}^{1}\prod_{k\in E:\sigma_{n}(k^{-1})(j)=\sigma_{n}(g^{-1})(j)}f_{k}(x)\,dx\right)^{\frac{1}{|\{a\in E:\sigma_{n}(a^{-1})(j)=\sigma_{n}(g^{-1})(j)\}|}},
\end{align*}
where in the last step we use that $f_{g}\geq 0$ to make sense of the fractional power (which is there to account for over-counting). Because $j\in \Omega_{n}$:
\begin{align*}
    &\int_{[0,1]^{d_{n}}}\prod_{g\in E}f_{g}(x(\sigma_{n}(g)^{-1}(j)))\,dx=\prod_{g\in E}\left(\int_{0}^{1}\prod_{k\in E}1_{\{j\}}(\sigma_{n}(gk^{-1})(j))f_{k}(x)\,dx\right)^{\frac{1}{|\{a\in E:\sigma_{n}(ga^{-1})(j)=j\}|}}\\
\end{align*}
Since $u_{d_{n}}(\Omega_{n})\to 1$, we have altogether shown that
\begin{align*}
  &\lim_{n\to\omega}\int_{[0,1]^{d_{n}}}(\phi_{x})_{*}(u_{d_{n}})(f)\,dx=\\
  &\lim_{n\to\omega}\frac{1}{d_{n}}\sum_{j\in [d_{n}]}\prod_{b\in F}1_{\{j\}}(\sigma_{n}(b)(j))\prod_{g\in E}\left(\int_{0}^{1}\prod_{k\in E}1_{\{j\}}(\sigma_{n}(gk^{-1})(j))f_{k}(x)\,dx\right)^{\frac{1}{|\{a\in E:\sigma_{n}(ga^{-1})(j)=j\}|}}.
\end{align*}
It is direct to verify that the function $\Phi_{f}\colon \{0,1\}^{G}\to [0,+\infty)$ given by
\[\Phi_{f}(y)=\left(\prod_{b\in F}y(b)\right)\prod_{g\in E}\left(\int_{0}^{1}\prod_{k\in E}y(gk^{-1})f_{k}(x)\,dx\right)^{\frac{1}{|\{a\in E:y(ga^{-1})=1\}|}}\]
is continuous (in fact, it is locally constant).
The definition of $\IRS(\sigma_{\omega})$ thus proves that:
\[\lim_{n\to\omega}\int_{[0,1]^{d_{n}}}(\phi_{x})_{*}(u_{d_{n}})(f)\,dx=
\int \Phi_{f}(H)\,d\Theta(H).\]
By definition,
\[\left(y\mapsto \prod_{b\in F}y(b)\right)=\cE_{F}\]
moreover, if $H\in \Sub(G)$, then $1_{H}(ga^{-1})=1$ if and only if $Ha=Hg$. Thus, viewing $\Sub(G)\subseteq\{0,1\}^{G}$, we have 
\[\Phi_{f}(H)=\cE_{F}(H)\prod_{g\in E}\left(\int_{0}^{1}\prod_{k\in E}1_{H}(gk^{-1})f_{k}(x)\,dx\right)^{\frac{1}{|\{a\in E:Ha=Hg\}|}}, \textnormal{ for all $H\in \Sub(G)$}\]
So
\[\lim_{n\to\omega}\int_{[0,1]^{d_{n}}}(\phi_{x})_{*}(u_{d_{n}})(f)\,dx=\int_{\Sub(G)}\cE_{F}(H)\prod_{g\in E}\left(\int_{0}^{1}\prod_{k\in E}1_{H}(gk^{-1})f_{k}(x)\,dx\right)^{\frac{1}{|\{a\in E:Ha=Hg\}|}}\,d\Theta(H).\]
Note that 
\[\prod_{c\in HE/H}\left(\int_{0}^{1}\prod_{g\in c\cap E}f_{g}(x)\,dx\right)=\prod_{g\in E}\left(\int_{0}^{1}\prod_{k\in E}1_{H}(gk^{-1})f_{k}(x)(x)\,dx\right)^{\frac{1}{|\{a\in E:Ha=Hg\}|}},\]
with the last step following because the inverse image of $g$ under the map $E\mapsto G/H,g\mapsto Hg$ has cardinality $|\{a\in E:Ha=Hg\}|.$
Thus
\[\lim_{n\to\omega}\int_{[0,1]^{d_{n}}}(\phi_{x})_{*}(u_{d_{n}})(f)\,dx=\int_{\Sub(G)}\cE_{F}(H)\prod_{c\in HE/H}\left(\int_{0}^{1}\prod_{g\in c\cap E}f_{g}(x)\,dx\right)\,d\Theta(H)\]
Note that for $H\in \Sub(G)$, we have that 
\[\int_{[0,1]^{G}}\prod_{g\in E}f_{g}(x(g))\,dm^{\otimes G/H}(x)=\prod_{c\in HE/H}\left(\int_{0}^{1}\prod_{g\in c\cap E}f_{g}(x)\,dx\right),\]
where we view $m^{\otimes G/H}$  as a probability measure on $[0,1]^{G}$ which is supported on the $x\in [0,1]^{G}$ which are constant on right $H$-cosets. So
\[\int \cE_{F}(H)\prod_{g\in E}f_{g}(x(g))\,d\mu_{\Theta}(H,x)=\int_{\Sub(G)}\cE_{F}(H)\prod_{c\in HE/H}\left(\int_{0}^{1}\prod_{g\in c\cap E}f_{g}(x)\,dx\right)\,d\Theta(H)\]
Altogether, this shows that 
\begin{align*}
    \lim_{n\to\omega}\int_{[0,1]^{d_{n}}}(\phi_{x})_{*}(u_{d_{n}})(f)\,dx&=\int \cE_{F}(H)\prod_{g\in E}f_{g}(x(g))\,d\mu_{\Theta}(H,x)\\
    &=\int f\,d\mu_{\Theta},
\end{align*}
as  desired.

(\ref{item:variance argument}):
Define
$T_{n}\colon C((\{0,1\}\times [0,1])^{G})\to L^{2}([0,1]^{d_{n}})$
by
\[(T_{n}\zeta)(x)=(\phi_{x})_{*}(u_{d_{n}})(\zeta)-J_{n,\zeta}.\]
Giving $C((\{0,1\}\times [0,1])^{G})$ the supremum norm we have the operator norm of $T_{n}$ satisfies $\|T_{n}\|\leq 2$. This uniform estimate implies that $\{\zeta\in C((\{0,1\}\times [0,1])^{G}):\|T_{n}\zeta\|_{2}\to_{n\to\omega}0\}$ is a closed, linear subspace. So as in (\ref{item: on average wk* convergence}), it suffices to verify the desired statement for $f=\left(\bigotimes_{g\in E}f_{g}\right)\cE_{F}\in B$, where $(f_{g})_{g\in E}\in C([0,1])^{G}$ and $E,F\subseteq G$ are finite.
By direct computation,
\begin{equation}\label{eqn: its a variance}
\int_{[0,1]^{d_{n}}}|(\phi_{x})_{*}(u_{d_{n}})(f)-J_{n,f}|^{2}\,dx=\int_{[0,1]^{d_{n}}}|(\phi_{x})_{*}(u_{d_{n}})|^{2}\,dx-|J_{n,f}|^{2}.
\end{equation}
Moreover,
\[\int_{[0,1]^{d_{n}}}|(\phi_{x})_{*}(u_{d_{n}})|^{2}\,dx=\frac{1}{d_{n}^{2}}\sum_{j,k\in \varsigma_{F,n}}\int_{[0,1]^{d_{n}}}\prod_{g,h\in E}f_{g}(x(\sigma_{n}(g)^{-1}(j)))\overline{f_{h}(x(\sigma_{n}(h)^{-1}(j)))}\,dx.\]
We can rewrite this as
\begin{align} \label{eqn: you know I had to do it}
  &\frac{1}{d_{n}^{2}}\sum_{\substack{j,k\in \varsigma_{F,n},\\ \sigma_{n}(E)^{-1}(j)\cap \sigma_{n}(E)^{-1}(k)\ne \varnothing}}\int_{[0,1]^{d_{n}}}\prod_{g,h\in E}f_{g}(x(\sigma_{n}(g)^{-1}(j)))\overline{f_{h}(x(\sigma_{n}(h)^{-1}(k)))}\,dx\\
  &+\frac{1}{d_{n}^{2}}\sum_{\substack{j,k\in \varsigma_{F,n},\\ \sigma_{n}(E)^{-1}(j)\cap \sigma_{n}(E)^{-1}(k)= \varnothing}}\int_{[0,1]^{d_{n}}}\prod_{g,h\in E}f_{g}(x(\sigma_{n}(g)^{-1}(j)))\overline{f_{h}(x(\sigma_{n}(h)^{-1}(k)))}\,dx  \nonumber
\end{align}
For a fixed $j$, the set of $k$ for which $\sigma_{n}(E)^{-1}(j)\cap \sigma_{n}(E)^{-1}(k)\ne \varnothing$ has cardinality at most $|E|^{2}$. So the first term is bounded by
\[\frac{1}{d_{n}}|E|^{2}\prod_{g\in E}\|f_{g}\|_{\infty}^{2}\to_{n\to\omega}0.\]
For $n\in \bN$, define $\Psi_{n,f}\colon \{1,\cdots,d_{n}\}\to \bR$ by
\[\Psi_{n,f}(j)=\int_{[0,1]^{d_{n}}}\prod_{g\in E}f_{g}(x(\sigma_{n}(g)^{-1}(j)))\,dx\]
The second term in (\ref{eqn: you know I had to do it}) is:
\[\frac{1}{d_{n}^{2}}\sum_{\substack{j,k\in \varsigma_{F,n},\\ \sigma_{n}(E)^{-1}(j)\cap \sigma_{n}(E)^{-1}(k)= \varnothing}}\Psi_{n,f}(j)\overline{\Psi_{n,f}(k)}\]
Thus
\begin{equation}\label{eqn:collecting variance terms I}
\lim_{n\to\omega}\int_{[0,1]^{d_{n}}}|(\phi_{x})_{*}(u_{d_{n}})|^{2}\,dx=\lim_{n\to\omega}\frac{1}{d_{n}^{2}}\sum_{\substack{j,k\in \varsigma_{F,n},\\ \sigma_{n}(E)^{-1}(j)\cap \sigma_{n}(E)^{-1}(k)=\varnothing}}\Psi_{n,f}(j)\overline{\Psi_{n,f}(k)}.
\end{equation}
By another direct computation,
\[|J_{n,f}|^{2}=\frac{1}{d_{n}^{2}}\sum_{j,k\in \varsigma_{F,n}}\Psi_{n,f}(j)\overline{\Psi_{n,f}(k)}.\]
By the same estimates as above, 
\[\lim_{n\to\omega}|J_{n,f}|^{2}=\lim_{n\to\omega}\frac{1}{d_{n}^{2}}\sum_{\substack{j,k\in \varsigma_{F,n},\\ \sigma_{n}(E)^{-1}(j)\cap \sigma_{n}(E)^{-1}(k)=\varnothing}}\Psi_{n,f}(j)\overline{\Psi_{n,f}(k)}\]
 Combining this with (\ref{eqn: its a variance}),(\ref{eqn:collecting variance terms I})  shows that
 \[\lim_{n\to\omega}\int_{[0,1]^{d_{n}}}|(\phi_{x})_{*}(u_{d_{n}})(f)-J_{n,f}|^{2}\,dx=0.\]
 Since this is true for every free ultrafilter, we have proven (\ref{item:variance argument}).

 (\ref{item:asymptotic equivariance}):
 Since $f$ is continuous, we can find an $C\geq 0$ with $|f|\leq C$. View $\{0,1\}\times [0,1]\subseteq \bR^{2}$ and give $\bR^{2}$ the norm $\|(t,s)\|=\sqrt{t^{2}+s^{2}}$.  Given $\varepsilon>0$, compactness of $(\{0,1\}\times [0,1])^{G}$ and continuity of $f$ imply that we may find a finite $E\subseteq G$ and a $\delta>0$ so that if $x,y\in (\{0,1\}\times  [0,1])^{G}$ and
 \[\|x(g)-y(g)\|<\delta \textnormal{ for all $g\in E$},\]
 then
 \[|f(x)-f(y)|<\varepsilon.\]
 Then, for any $x\in [0,1]^{d_{n}}$ we have
 \begin{align*}\|f\circ \phi_{x}\circ \sigma_{n}(g)^{-1}&-\alpha_{g}(f)\circ \phi_{x}\|_{\ell^{2}(u_{d_{n}})}^{2}\\
 &\leq\varepsilon^{2}+C^{2}u_{d_{n}}(\{j\in [d_{n}]:g^{-1}\phi_{x}(j)\big|_{E}\ne \phi_{x}(\sigma_{n}(g)^{-1}(j))\big|_{E}\})\\
 &\leq \varepsilon^{2}+C^{2}u_{d_{n}}\left(\bigcup_{h\in E}\{j:\sigma_{n}(gh)^{-1}(j)\ne \sigma_{n}(h)^{-1}\sigma_{n}(g)^{-1}(j)\}\right).
 \end{align*}
 Since $\sigma_{n}$ is an asymptotic homomorphism the second term tends to zero as $n\to\infty$.
\end{proof}

While technical, Lemma \ref{lem:building the embedding via prob} has all the tools to prove Theorem \ref{thm: I said the real Bernoulli shift}. Indeed, as we now show,  Lemma \ref{lem:building the embedding via prob} essentially says that a random choice of $x\in [0,1]^{d_{n}}$ will produce a sofic approximation of the appropriate equivalence relation.

\begin{proof}[Proof of Theorem \ref{thm: I said the real Bernoulli shift}]
For notation, define $S\colon [d_{n}]\to \{0,1\}^{G}$ by
$S(j)(g)=1_{\{j\}}(\sigma_{n}(g)(j)).$
Set
$\Theta_{n}=S_{*}(u_{d_{n}}).$
For $n\in \bN,f\in C((\{0,1\}\times [0,1])^{G})$, let $J_{n,f}=\int_{[0,1]^{d_{n}}} (\phi_{x})_{*}(u_{d_{n}})(f)\,dx$.
Fix a countable, dense set $D\subseteq C((\{0,1\}\times [0,1])^{G})$ which is closed under products and is $G$-invariant.  Write $D=\bigcup_{k=1}^{\infty}D_{k}$  and $G=\bigcup_{n=1}^{\infty}F_{k}$ where $D_{k},F_{k}$ are increasing sequences of finite sets. For $k\in\bN$, let $L_{k}$ be the set of $n\in \bN$ so that
\begin{itemize}
      \item $n\geq k$
    \item $\sum_{f\in D_{k},g\in F_{k}}|\mu_{\Theta}(f\cE_{g})-J_{n,f\cE_{g}}|<2^{-k}$
    \item $\sum_{f\in D_{k},g\in F_{k}}\left(\int_{[0,1]^{d_{n}}}\left|(\phi_{x})_{*}(u_{d_{n}})(f\cE_{g})-J_{n,f\cE_{g}}\right|^{2}\,dx\right)^{1/2}<2^{-k}$,
    \item $\sup_{x\in [0,1]^{d_{n}}}\sum_{g\in F_{k},f\in D_{k}}\|f\circ \phi_{x}\circ\sigma_{n}(g)^{-1}-\alpha_{g}(f)\circ \phi_{x}\|_{\ell^{2}(u_{d_{n}})}<2^{-k}.$
\end{itemize}
Then $L_{k}$ is a decreasing sequence of sets and Lemma \ref{lem:building the embedding via prob} implies that $L_{k}\in \omega$ for all $k\in \bN$, and $\bigcap_{k}L_{k}=\varnothing$ by the first bullet point. Set $L_{0}=\bN\setminus L_{1}$. For $n\in \bN$ let $k(n)$ be such that $n\in L_{k(n)}\setminus L_{k(n)+1}$.
For $n\in \bN$, let $\Omega_{n}$ be the set of $x\in [0,1]^{d_{n}}$ so that
\begin{equation}\label{eqn:last sofic approx}\sum_{f\in D_{k(n)},g\in F_{k(n)}}|(\phi_{x})_{*}(u_{d_{n}})(f\cE_{g})-J_{n,f\cE_{g}}|<2^{-k/2},
\end{equation}
Then for $n\in L_{k}$ we have $m^{\otimes d_{n}}(\Omega_{n}^{c})\leq 2^{-k/2}$. Hence if $k\geq (2023)!$ and $n\in L_{k}$, we may choose an $x_{n}\in \Omega_{n}$. Let $x_{n}$ be defined arbitrarily for $n\in \bN\setminus L_{(2023)!}$.
 Define 
\[\rho_{0}\colon D\to \prod_{k\to\omega}(\ell^{\infty}(d_{n}),u_{d_{n}}))\]
by
$\rho_{0}(f)=(f\circ \phi_{x_{k(n)}})_{n\to\omega}$. 
Then $\rho_{0}$  preserves products.
Recall that we view $\ell^{\infty}(d_{n})$ as a subalgebra of $M_{d_{n}}(\bC)$ by identifying each function with the corresponding diagonal matrix. Let $f\in D,g\in G$.
By the fact that $\sigma_{n}$ is an asymptotic homomorphism and the fourth bullet point above, we know that 
\[\sigma_{\omega}(g)\rho_{0}(f)\sigma_{\omega}(g)^{-1}=\rho_{0}(\alpha_{g}(f)).\]
 By our choice of $x_{n}$, 
\begin{align*}
\tau_{\omega}(\rho_{0}(f)\sigma_{\omega}(g))=\lim_{n\to\omega}\tr(f\circ \phi_{x_{k(n)}}\sigma_{n}(g))&=\lim_{n\to\omega}\frac{1}{d_{n}}\sum_{j}f(\phi_{x_{k(n)}}(j))1_{\{j\}}(\sigma_{n}(g)(j))\\
&=\lim_{n\to\omega}J_{n,f\cE_{g}},
\end{align*}
where in the last step we use (\ref{eqn:last sofic approx}). By Lemma \ref{lem:building the embedding via prob} this last limit is
\[\mu_{\Theta}(f\cE_{g})=\int_{\{(H,x):g\in H\}}f(H,x)\,d\mu_{\Theta}(H,x).\]
Since $([0,1],m)$ is atomless, and $G$ is countable
\[\mu_{\Theta}\left(\bigcup_{g_{1},g_{2}\in G}\{(H,x):Hg_{1}\ne Hg_{2}\textnormal{ and }x(g_{1})=x(g_{2})\}\right)=0.\]
Thus for every $g\in G$:
\[\mu_{\Theta}(\{(H,x):(gHg^{-1},gx)=(H,x)\}\Delta\{(H,x):g\in H\})=0.\]
So
\[\tau_{\omega}(\rho_{0}(f)\sigma_{\omega}(g))=\int_{\{(H,x):(gHg^{-1},gx)=(H,x)\}}f\,d\mu_{\Theta}.\]
Thus Proposition \ref{prop: equivalent definition of a sofic rep} (\ref{item: countable reduction sofic rep}) implies that implies there is a unique sofic approximation $\pi\colon L(\cR)\to \prod_{n\to\omega}(M_{d_{n}}(\bC),\tr)$ so that $\pi\circ \Xi=\sigma_{\omega}$ and $\pi|_{D}=\rho_{0}$. Set $\rho=\pi|_{L^{\infty}(X)}$ and $\widehat{\sigma}=\pi|_{[\cR]}$. By Proposition \ref{prop: equivalent definition of a sofic rep} (\ref{item: better defn converse}) we have that $(\rho,\widehat{\sigma})$ is a sofic approximation. The fact that $\widehat{\sigma}\circ \Xi=\sigma_{\omega}$ is true by construction.

\end{proof}

\section{Applications of Theorem \ref{thm: main theorem intro}} \label{sec: applications}

One of the applications of the Theorem we 
 wish to highlight is the following result which is a reformulation of the Newman-Sohler Theorem \cite{NSStable} (see \cite[Theorem 5]{ELEK20122593} for a statement of the Newman-Sohler theorem which is closer to our language).

\begin{thm}\label{thm:equivalence of IRS means conjugacy}
  Let $G$ be an amenable group, and $\sigma_{n},\psi_{n}\colon G\to \Sym(d_{n})$ two sequences of approximate homomorphisms. Let $\omega\in\beta\bN\setminus\bN$. Then $\sigma_{\omega}$ is conjugate to $\psi_{\omega}$ if and only if $\IRS(\sigma_{\omega})=\IRS(\psi_{\omega})$.
\end{thm}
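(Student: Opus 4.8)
The plan is to derive Theorem~\ref{thm:equivalence of IRS means conjugacy} from Theorem~\ref{thm: I said the real Bernoulli shift} together with the hyperfiniteness/uniqueness facts for sofic approximations of amenable equivalence relations mentioned in the introduction. The forward direction (conjugacy implies equality of IRS) is the easy one: if $\psi_\omega = u\sigma_\omega u^{-1}$ for some $u = (u_n)_{n\to\omega}\in\cS_\omega$, then for each finite $F\subseteq G$ the sets $\varsigma_{F,n}$ for $\sigma_n$ and for $\psi_n$ differ by applying a permutation, so the pushforward measures $\Theta_n^{\sigma}$ and $\Theta_n^{\psi}$ agree up to the permutation invariance of $u_{d_n}$; passing to the $\omega$-limit gives $\IRS(\sigma_\omega)=\IRS(\psi_\omega)$. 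Concretely, if $S_{\sigma_n}$ and $S_{\psi_n}$ are the maps into $\{0,1\}^G$ from the IRS construction, then $S_{\psi_n}\circ u_n^{-1}$ is close in Hamming distance to $S_{\sigma_n}$ (using that conjugation by $u_n$ intertwines the approximate actions up to $d_{\Hamm}$-small error), and since $u_n$ preserves $u_{d_n}$ the pushforwards are asymptotically equal in the weak$^*$ topology.

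For the reverse direction, set $\Theta = \IRS(\sigma_\omega) = \IRS(\psi_\omega)$ and let $\cR$ be the orbit equivalence relation of the $\Theta$-Bernoulli action over $G$ with base $([0,1],m)$, with $\Xi\colon G\to[\cR]$ as in Theorem~\ref{thm: I said the real Bernoulli shift}. Applying that theorem to each of the two sequences produces sofic approximations $(\rho,\widehat\sigma)$ and $(\rho',\widehat\psi)$ of $\cR$ (a priori along possibly different integer sequences, but since both use the same $d_n$ coming from the original approximate homomorphisms this is not an issue) with $\widehat\sigma\circ\Xi = \sigma_\omega$ and $\widehat\psi\circ\Xi = \psi_\omega$. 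Now invoke amenability of $G$: by Ornstein--Weiss \cite{OWAnnounce,OrnWeiss} the relation $\cR$ is hyperfinite, and hence by P\u{a}unescu \cite[Proposition~1.20]{PaunSofic} any two sofic approximations of $\cR$ are conjugate. This yields an element $u = (u_n)_{n\to\omega}$ of the appropriate unitary group conjugating $(\rho,\widehat\sigma)$ to $(\rho',\widehat\psi)$; restricting along $\Xi$, it conjugates $\sigma_\omega$ to $\psi_\omega$.

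The main obstacle is making precise what "any two sofic approximations of $\cR$ are conjugate" gives at the level of the restricted maps $\sigma_\omega,\psi_\omega\in\cS_\omega$. The conjugacy statement for sofic approximations of $\cR$ naturally lives in $\cM = \prod_{n\to\omega}(M_{d_n}(\bC),\tr)$ and says the two trace-preserving embeddings $\pi,\pi'\colon L(\cR)\to\cM$ are unitarily conjugate; one must check that the conjugating unitary can be taken to lie in $\cS_\omega$ (i.e. to be represented by genuine permutations), not merely in the unitary group of $\cM$. This is exactly where one uses that both sofic approximations are honestly built from the permutation data $\sigma_n,\psi_n$ on the same sequence of finite sets, so that P\u{a}unescu's conjugacy theorem (which is proved at the level of finite permutation models for hyperfinite relations) produces permutation conjugators $u_n\in\Sym(d_n)$; this is precisely the content invoked as \cite[Proposition~1.20]{PaunSofic} and stated in the introduction. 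Once the conjugator is known to be in $\cS_\omega$, the equality $u\sigma_\omega u^{-1} = \psi_\omega$ follows by restricting the conjugacy $u\pi u^{-1}=\pi'$ to $\Xi(G)\subseteq[\cR]$ and using $\widehat\sigma\circ\Xi=\sigma_\omega$, $\widehat\psi\circ\Xi=\psi_\omega$.
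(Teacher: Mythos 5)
Your proposal is correct and follows essentially the same route as the paper: extend both $\sigma_\omega$ and $\psi_\omega$ to sofic approximations of the orbit equivalence relation of the $\Theta$-Bernoulli action via Theorem \ref{thm: I said the real Bernoulli shift}, use Ornstein--Weiss hyperfiniteness together with P\u{a}unescu's conjugacy theorem \cite[Proposition 1.20]{PaunSofic} to produce a conjugator $\chi\in\cS_\omega$, and restrict along $\Xi$. Your added remarks (that the forward direction is an elementary pushforward computation, and that the conjugator genuinely lies in $\cS_\omega$ rather than merely in the unitary group of $\cM$) are both correct and consistent with how the paper invokes these facts.
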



\begin{proof}

Let $\Theta=\IRS(\sigma_{\omega})=\IRS(\psi_{\omega})$, and let $\cR$ be the orbit equivalence relation of the $\Theta$-Bernoulli shift with base $([0,1],m)$. Let $\Xi$ be given as in Theorem \ref{thm: I said the real Bernoulli shift}. By Theorem \ref{thm: I said the real Bernoulli shift}, we can find sofic approximations $(\rho_{j},\widehat{\sigma}_{j}),j=1,2$ so that $\widehat{\sigma}_{1}\circ \Xi=\sigma_{\omega}$, $\widehat{\sigma}_{2}\circ \Xi=\psi_{\omega}$. Since $G$ is amenable, we know by \cite{OWAnnounce}, \cite[II \S 3]{OrnWeiss} (see also \cite{CFW})  that $\cR$ is hyperfinite. Thus, by \cite[Proposition 1.20]{PaunSofic} there is a $\chi\in \cS_{\omega}$ with
\[\chi\widehat{\sigma}_{1}(\alpha)\chi^{-1}=\widehat{\sigma}_{2}(\alpha) \textnormal{ for all $\alpha\in [\cR]$.}\]
In particular,
\[\chi \sigma_{\omega}(g)\chi^{-1}=\chi\widehat{\sigma}_{1}(\Xi(g))\chi^{-1}=\widehat{\sigma}_{2}(\Xi(g))=\psi_{\omega}(g)\]
for all $g\in G$.

\end{proof}

We remark that Theorem \ref{thm:equivalence of IRS means conjugacy} recovers the result of Kerr-Li \cite[Lemma 4.5]{KLi2} and Elek-Szabo \cite{elekxzabo} on uniqueness of sofic approximations of amenable groups up to asymptotic conjugacy. Indeed, sofic approximations correspond to the cases $\IRS(\sigma_{\omega})=\IRS(\psi_{\omega})=\{1\}$. 
Another reformulation of this result is as follows.

\begin{cor}[Theorem 3.12 of \cite{APStable}] \label{cor:AP stability}
Let $G$ be an amenable group, and $\psi_{n},\sigma_{n}\colon G\to \Sym(d_{n})$ be approximate homomorphisms and fix a free ultrafilter $\omega\in\beta\bN\setminus\bN$. 
\begin{enumerate}
\item \label{item:AP ultrafilter}
Then  $(\psi_{\omega}),(\sigma_{\omega})$ are conjugate if and only if for all finite $F\subseteq G$
\[\lim_{n\to\omega}u_{d_{n}}\left(\bigcap_{g\in F}\{j:\psi_{n}(g)(j)=j\}\right)=\lim_{n\to\omega}u_{d_{n}}\left(\bigcap_{g\in F}\{j:\sigma_{n}(g)(j)=j\}\right)\]
\item  In particular, $(\sigma_{n})$,$(\psi_{n})_{n}$ are asymptotically conjugate as $n\to\infty$ if and only if for all finite $F\subseteq G$
 \[\lim_{n\to\infty}\left|\frac{1}{d_{n}}|\{j:\sigma_{n}(g)(j)=j \textnormal{ for all $g\in F$}\}|-\frac{1}{d_{n}}|\{j:\psi_{n}(g)(j)=j \textnormal{ for all $g\in F$}\}|\right|=0\]
 \label{item:AP asymptotic}
\end{enumerate}

\end{cor}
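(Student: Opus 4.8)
The plan is to reduce the statement directly to Theorem \ref{thm:equivalence of IRS means conjugacy} by observing that the displayed quantities are precisely the values of the IRS against the basic clopen sets $\cE_F$. First I would recall that $\IRS(\sigma_\omega) = \lim_{n\to\omega} \Theta_n$ where $\Theta_n = (S_{\sigma_n})_*(u_{d_n})$, and that by definition $S_{\sigma_n}(j)(g) = 1_{\{j\}}(\sigma_n(g)(j))$, so that for a finite $F \subseteq G$ we have $\cE_F(S_{\sigma_n}(j)) = \prod_{g\in F} 1_{\{j\}}(\sigma_n(g)(j)) = 1$ exactly when $\sigma_n(g)(j) = j$ for all $g \in F$. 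Consequently
\[
\int \cE_F \, d\Theta_n = u_{d_n}\!\left(\bigcap_{g\in F}\{j:\sigma_n(g)(j)=j\}\right),
\]
and taking the limit along $\omega$ gives $\int \cE_F\, d\IRS(\sigma_\omega)$ as the left-hand quantity in item (\ref{item:AP ultrafilter}), with the analogous identity for $\psi$.

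For the forward direction of item (\ref{item:AP ultrafilter}), conjugacy of $\sigma_\omega$ and $\psi_\omega$ implies $\IRS(\sigma_\omega) = \IRS(\psi_\omega)$ (the IRS is plainly a conjugacy invariant: conjugating $\sigma_n$ by a permutation $\chi_n$ simply relabels the points $j$, which pushes $\Theta_n$ forward by a measure-preserving bijection of $[d_n]$ and hence leaves it fixed — and one checks this passes to the ultralimit), so in particular the two sides agree against every $\cE_F$, which is the displayed equality. For the converse, I would argue that the $\cE_F$, $F \subseteq G$ finite, span a dense subalgebra of $C(\Sub(G))$ — indeed $\cE_F \cE_{F'} = \cE_{F \cup F'}$ and these functions separate points of $\Sub(G)$, so Stone–Weierstrass applies — hence equality of the integrals $\int \cE_F\, d\IRS(\sigma_\omega) = \int \cE_F\, d\IRS(\psi_\omega)$ for all finite $F$ forces $\IRS(\sigma_\omega) = \IRS(\psi_\omega)$ by the Riesz representation theorem. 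Now Theorem \ref{thm:equivalence of IRS means conjugacy} (which uses amenability of $G$) yields that $\sigma_\omega$ and $\psi_\omega$ are conjugate.

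Item (\ref{item:AP asymptotic}) follows from item (\ref{item:AP ultrafilter}) by a routine ultrafilter argument: the sequences $(\sigma_n)$, $(\psi_n)$ are asymptotically conjugate as $n \to \infty$ if and only if $\sigma_\omega$ and $\psi_\omega$ are conjugate for every free ultrafilter $\omega$, and likewise the $n \to \infty$ convergence of the difference of the counting fractions to $0$ holds if and only if the $n \to \omega$ limits agree for every $\omega$; apply item (\ref{item:AP ultrafilter}) for each $\omega$. (One should be slightly careful that a $\chi_\omega \in \cS_\omega$ implementing the conjugacy lifts to a sequence $\chi_n \in \Sym(d_n)$ with $d_{\Hamm}(\chi_n \sigma_n(g) \chi_n^{-1}, \psi_n(g)) \to_{n\to\omega} 0$ for each $g$, which is immediate from the definition of the ultraproduct group, and then a diagonal argument over a countable exhausting family $F_k$ of finite subsets of $G$ upgrades ``for every $\omega$'' to genuine $n\to\infty$ convergence.)

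I expect the only real subtlety to be this last bookkeeping step — translating between the ultraproduct formulation and the honest $n \to \infty$ statement — since everything else is a direct unwinding of definitions plus an invocation of Theorem \ref{thm:equivalence of IRS means conjugacy}. The identification of $\int \cE_F\, d\IRS(\sigma_\omega)$ with the fixed-point density is purely formal, and the density of $\mathrm{span}\{\cE_F\}$ in $C(\Sub(G))$ is standard Stone–Weierstrass.
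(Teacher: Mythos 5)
Your proposal is correct and follows essentially the same route as the paper: identify $\int \cE_F\,d\,\IRS(\sigma_\omega)$ with the $\omega$-limit of the fixed-point densities, use Stone--Weierstrass and Riesz to conclude $\IRS(\sigma_\omega)=\IRS(\psi_\omega)$, invoke Theorem \ref{thm:equivalence of IRS means conjugacy}, and then pass from ``conjugate along every free ultrafilter'' to honest $n\to\infty$ asymptotic conjugacy via a diagonal argument over an exhaustion of $G$ by finite sets (the paper phrases this last step as a subsequence-plus-contradiction argument, which is the same mechanism). The only cosmetic difference is that you spell out the forward implications, which the paper leaves as exercises.
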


\begin{proof}

(\ref{item:AP ultrafilter}):
The forward implication is an exercise. For the reverse,
let $\Theta_{1}=\IRS(\sigma_{\omega}),\Theta_{2}=\IRS(\psi_{\omega})$. For every finite $F\subseteq G$ we have
\begin{align*}
   \Theta_{1}(\{H:F\subseteq H\})=\int \cE_{F}\,d\Theta_{1}&=\lim_{n\to\omega}\frac{1}{d_{n}}|\{j:\sigma_{n}(g)(j)=j\textnormal{ for all $g\in F$}\}|\\
&=\lim_{n\to\omega}\frac{1}{d_{n}}|\{j:\psi_{n}(g)(j)=j\textnormal{ for all $g\in F$}\}|\\
&=\int \cE_{F}\,d\Theta_{2}=\Theta_{2}(\{H:F\subseteq H\}).
\end{align*}
The $*$-subalgebra
$A_{0}=\Span\{\mathcal{E}_{F}:F\subseteq G\textnormal{ is finite}\}$
of $C(\Sub(G))$ contains $1$ and separates points, and is thus dense by Stone-Weierstrass.
It follows by the density and the Riesz representation theorem that 
 $\Theta_{1}=\Theta_{2}$. 
 The result now follows by Theorem \ref{thm:equivalence of IRS means conjugacy}.

 (\ref{item:AP asymptotic}):
 The forward implication is an exercise. For the reverse,
 suppose that  \[\lim_{n\to\infty}\left|\frac{1}{d_{n}}\left|\{j:\sigma_{n}(g)(j)=j\textnormal{ for all $g\in F$}\}\right|-\frac{1}{d_{n}}\left|\{j:\psi_{n}(g)(j)=j\textnormal{ for all $g\in F$}\}\right|\right|=0\]
for every finite $F\subseteq G$. To show that $\psi_{n},\sigma_{n}$ are conjugate it suffices by a diagonal argument to show that for every finite $E\subseteq G$
\[\lim_{n\to\infty}\inf_{\chi\in \Sym(d_{n})}\sum_{g\in E}d_{\Hamm}(\chi\sigma_{n}(g)\chi^{-1},\psi_{n}(g))=0.\]

If this does not hold, then there is an increasing sequence $n_{1}<n_{2}<\cdots$ and an $\varepsilon>0$ with
\[\inf_{\chi\in \Sym(d_{n})}\sum_{g\in E}d_{\Hamm}(\chi\sigma_{n_{k}}(g)\chi^{-1},\psi_{n_{k}}(g))\geq \varepsilon.\]
Let $\omega\in\beta\bN\setminus\bN$ with $\{n_{k}:k\in \bN\}\in \omega.$  By Corollary \ref{cor:AP stability} and our hypothesis, we see $(\sigma_{\omega}),(\psi_{\omega})$ are conjugate. Then there is a $\chi=(\chi_{n})_{n\to\omega}\in\prod_{n\to\omega}(\Sym(d_{n}),d_{\Hamm})$ so that $\chi\sigma_{\omega}(g)\chi^{-1}=\psi_{\omega}(g)$ for all $g\in G$.

In particular,
\[L=\left\{n:\sum_{g\in E}d_{\Hamm}(\chi_{n_{k}}\sigma_{n_{k}}(g)\chi_{n_{k}}^{-1},\psi_{n_{k}}(g))<\varepsilon\right\}\]
is in $\omega$ and by our choice of $n_{k},\omega$ it follows that $L\in \omega^{c}$. This contradicts $\omega$ being a filter.

\end{proof}

We remark that our proof of (\ref{item:AP ultrafilter}) goes through the fact that for a group $G$ any $\Theta\in \IRS(G)$ is uniquely determined by
\[(\Theta(\{H:F\subseteq H\}))_{F\subseteq G \textnormal{ finite}}\]
thus the data of the IRS and that of the action trace as given in \cite{APStable} are the same.

At this stage we need the following two propositions, for additional applications. Recall that if $\sigma_{n}\colon G\to\Sym(d_{n})$ is a sofic approximation and there is a $\Theta\in \Sub(G)$ with $\IRS(\sigma_{\omega})=\Theta$ for all $\omega\in \beta\bN\setminus\bN$, then we say that $(\sigma_{n})_{n}$ has \emph{stabilizer type $\Theta$.} Recall that we do \emph{not} define stabilizer type if $\IRS(\sigma_{\omega})$ is different for different choices of $\omega$.

\begin{prop}\label{prop: permanence}
Let $G$ be a sofic group and $\sigma_{n}\colon G\to \Sym(d_{n})$ approximate homomorphisms with stabilizer type $\Theta\in \IRS(G)$.
\begin{enumerate}[(i)]
\item \label{item: perturbation} If $\widetilde{\sigma}_{n}\colon G\to \Sym(d_{n})$ are maps so that
\[\lim_{n\to\infty}d_{\Hamm}(\sigma_{n}(g),\widetilde{\sigma}_{n}(g))=0,\textnormal{ for all $g\in G$},\]
then $\widetilde{\sigma}_{n}$ are approximate homomorphisms with stabilizer type $\Theta$.
\item If $(q_{n})_{n=1}^{\infty}$ is any sequence of integers, then
\[\sigma_{n}^{\oplus q_{n}}\colon G\to \Sym(\{1,\cdots,d_{n}\}\times \{1,\cdots, q_{n}\})\]
 given by $\sigma_{n}^{\oplus q_{n}}(g)(j,r)=(\sigma_{n}(g)(j),r)$ are approximate homomorphisms with stabilizer type $\Theta$.
\item If $(r_{n})_{n}$ is any sequence of integers so that $\frac{r_{n}}{d_{n}}\to 0$, define $\sigma_{n}\oplus t_{r_{n}}\colon G\to \Sym(d_{n}+r_{n})$ by \[(\sigma_{n}\oplus t_{r_{n}})(g)(j)=\begin{cases}
\sigma_{n}(g)(j), \textnormal{ if $1\leq j\leq d_{n}$}\\
j,\textnormal{ if $d_{n}+1\leq j\leq d_{n}+r_{n}$}
\end{cases},\]
then $\sigma_{n}\oplus t_{r_{n}}$ are approximate homomorphisms with stabilizer type $\Theta$.
\item \label{item: speed change} If $s\colon \bN\to \bN$ is any function so that
\[\lim_{n\to\infty}s(n)=+\infty,\]
then $\sigma_{s(n)}$ are approximate homomorphisms with stabilizer type $\Theta$.

\end{enumerate}

\end{prop}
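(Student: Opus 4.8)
The plan is to isolate one reformulation of ``stabilizer type'' and then dispatch all four items as short bookkeeping arguments; I expect no genuine obstacle, and in fact I do not expect to use soficity of $G$ anywhere (it is only ambient context in this section). For a sequence of maps $\tau_n\colon G\to\Sym(k_n)$ and a finite $F\subseteq G$, write $p_F(\tau_n)=u_{k_n}\big(\bigcap_{g\in F}\{j:\tau_n(g)(j)=j\}\big)$. Using, as in Section~\ref{IRS prelims}, that $\int\cE_F\,d\Theta_n=p_F(\sigma_n)$, that $\Span\{\cE_F:F\subseteq G\text{ finite}\}$ is dense in $C(\Sub(G))$ by Stone--Weierstrass, and that an element of $\IRS(G)$ is determined by the numbers $\big(\Theta(\{H:F\subseteq H\})\big)_{F}$, one checks that $(\sigma_n)_n$ has stabilizer type $\Theta$ if and only if $\lim_{n\to\infty}p_F(\sigma_n)=\Theta(\{H:F\subseteq H\})$ for every finite $F\subseteq G$ (here one also uses that a sequence in the compact metric space $\Prob(\Sub(G))$ has all its ultrafilter limits equal precisely when it converges). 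Similarly, $\tau_n$ is an approximate homomorphism exactly when $d_{\Hamm}(\tau_n(gh),\tau_n(g)\tau_n(h))\to 0$ for all $g,h\in G$. So for each of (i)--(iv) it suffices to (a) bound the relevant Hamming defect and (b) express $p_F$ of the new sequence through $p_F(\sigma_n)$, and observe both behave correctly.

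For (i) I would use bi-invariance of $d_{\Hamm}$ to get
\[d_{\Hamm}(\widetilde{\sigma}_n(gh),\widetilde{\sigma}_n(g)\widetilde{\sigma}_n(h))\le d_{\Hamm}(\sigma_n(gh),\sigma_n(g)\sigma_n(h))+d_{\Hamm}(\sigma_n(gh),\widetilde{\sigma}_n(gh))+d_{\Hamm}(\sigma_n(g),\widetilde{\sigma}_n(g))+d_{\Hamm}(\sigma_n(h),\widetilde{\sigma}_n(h))\to 0,\]
and the inclusion $\{j:\sigma_n(g)(j)=j\}\,\triangle\,\{j:\widetilde{\sigma}_n(g)(j)=j\}\subseteq\{j:\sigma_n(g)(j)\ne\widetilde{\sigma}_n(g)(j)\}$ (together with $\bigcap_g A_g\,\triangle\,\bigcap_g B_g\subseteq\bigcup_g(A_g\triangle B_g)$) to get $|p_F(\widetilde{\sigma}_n)-p_F(\sigma_n)|\le\sum_{g\in F}d_{\Hamm}(\sigma_n(g),\widetilde{\sigma}_n(g))\to 0$, so $p_F(\widetilde{\sigma}_n)$ has the same limit.

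For (ii) and (iii) I would just compute. In both constructions the disagreement set of the new maps for a pair $(g,h)$ is a copy of (resp.\ literally) the disagreement set of $\sigma_n$ sitting inside the $\sigma_n$-block, so the Hamming defect is at most (a factor $\le 1$ times) $d_{\Hamm}(\sigma_n(gh),\sigma_n(g)\sigma_n(h))\to 0$. For $\sigma_n^{\oplus q_n}$ one has $\bigcap_{g\in F}\{(j,r):\sigma_n^{\oplus q_n}(g)(j,r)=(j,r)\}=\big(\bigcap_{g\in F}\{j:\sigma_n(g)(j)=j\}\big)\times[q_n]$, hence $p_F(\sigma_n^{\oplus q_n})=p_F(\sigma_n)$ and the stabilizer type is unchanged. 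For $\sigma_n\oplus t_{r_n}$ the $F$-fixed set is $\big(\bigcap_{g\in F}\{j\le d_n:\sigma_n(g)(j)=j\}\big)\sqcup\{d_n+1,\dots,d_n+r_n\}$, so $p_F(\sigma_n\oplus t_{r_n})=\tfrac{d_n}{d_n+r_n}\,p_F(\sigma_n)+\tfrac{r_n}{d_n+r_n}$, which tends to the same limit $\Theta(\{H:F\subseteq H\})$ because $r_n/d_n\to 0$.

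For (iv) I would use the elementary fact that if $a_n\to L$ in $\bR$ and $s\colon\bN\to\bN$ satisfies $s(n)\to\infty$, then $a_{s(n)}\to L$; applying this to $a_n=d_{\Hamm}(\sigma_n(gh),\sigma_n(g)\sigma_n(h))$ and to $a_n=p_F(\sigma_n)$ shows $\sigma_{s(n)}$ is an approximate homomorphism of stabilizer type $\Theta$. The only step meriting real care is the reformulation in the first paragraph --- in particular the passage from ``$\IRS(\sigma_\omega)$ independent of $\omega$'' to honest convergence of the proportions $p_F(\sigma_n)$ --- after which (i)--(iv) are each a couple of lines.
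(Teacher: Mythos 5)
Your proposal is correct and complete; the paper itself dismisses all four items with ``These are all exercises,'' and your solution is exactly the intended one, reducing ``stabilizer type $\Theta$'' to convergence of the fixed-point proportions $p_F(\sigma_n)\to\Theta(\{H:F\subseteq H\})$ via the $\cE_F$/Stone--Weierstrass argument that the paper uses verbatim in the proof of Corollary~\ref{cor:AP stability}. The only microscopic point worth recording is that since the measures $\Theta_n$ live a priori on $\{0,1\}^G$ rather than on $\Sub(G)$, one should invoke density of $\Span\{\cE_F\}$ in $C(\{0,1\}^{G})$ (which holds by the same separation-of-points argument); with that, your observation that soficity of $G$ is never needed is also accurate.
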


\begin{proof}
These are all exercises.
\end{proof}

We also need the following analogue of \cite[Proposition 6.1]{AP1}, which is proved exactly as in \cite[Lemma 7.6]{BLT} using Proposition \ref{prop: permanence}.

\begin{prop}\label{prop:change the sequence}
Let $G$ be a  countable discrete group and $\Theta\in \IRS(G)$. Suppose there is a sequence of integers $k_{n}$ with $k_{n}\to\infty$ and approximate homomorphisms $\psi_{n}\colon G\to \Sym(k_{n})$ with stabilizer type $\Theta$.  Then for any sequence of integers $d_{n}\to \infty$, there are approximate homomorphisms $\sigma_{n}\colon G\to \Sym(d_{n})$ with stabilizer type $\Theta$.
\end{prop}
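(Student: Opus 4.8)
The plan is to manufacture the target sequence of dimensions $(d_n)_n$ out of the given sequence $(\psi_n)_n$ by chaining three of the stability operations recorded in Proposition \ref{prop: permanence}: first re-index by a slowly growing function (speed change), then amplify each permutation, and finally pad with a negligible proportion of extra fixed points.

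First I would fix the re-indexing. Since $k_m\to\infty$ and $d_n\to\infty$, for each $j\in\bN$ the set $\{n:d_n<j\,k_j\}$ is finite, so one can pick integers $N_1<N_2<\cdots$ with $d_n\geq j\,k_j$ for every $n\geq N_j$. Setting $m(n)=\max\{j:N_j\leq n\}$ for $n\geq N_1$ (and defining $\sigma_n$ arbitrarily for the finitely many smaller $n$, which is harmless since stabilizer type is an asymptotic condition), one gets $m(n)\to\infty$ and $d_n\geq m(n)\,k_{m(n)}$, hence $k_{m(n)}/d_n\to 0$. By Proposition \ref{prop: permanence} (\ref{item: speed change}) the maps $\psi_{m(n)}\colon G\to\Sym(k_{m(n)})$ are then approximate homomorphisms with stabilizer type $\Theta$.

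Next I would run Euclidean division $d_n=q_n\,k_{m(n)}+r_n$ with $0\leq r_n<k_{m(n)}$; from $d_n\geq m(n)\,k_{m(n)}$ this gives $q_n\geq 1$ and $q_n\,k_{m(n)}=d_n-r_n\to\infty$. Amplifying by the (arbitrary) integer sequence $(q_n)_n$, Proposition \ref{prop: permanence} (ii) yields approximate homomorphisms $\psi_{m(n)}^{\oplus q_n}\colon G\to\Sym(q_n k_{m(n)})$ with stabilizer type $\Theta$. Finally, since
\[\frac{r_n}{q_n\,k_{m(n)}}=\frac{r_n}{d_n-r_n}<\frac{k_{m(n)}}{d_n-k_{m(n)}}\longrightarrow 0,\]
Proposition \ref{prop: permanence} (iii) applies with the padding sequence $(r_n)_n$, so
\[\sigma_n:=\psi_{m(n)}^{\oplus q_n}\oplus t_{r_n}\colon G\to\Sym\bigl(q_n k_{m(n)}+r_n\bigr)=\Sym(d_n)\]
are approximate homomorphisms with stabilizer type $\Theta$, as required. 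This is exactly the scheme of \cite[Lemma 7.6]{BLT}.

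I do not expect a genuine obstacle: the substance is entirely in Proposition \ref{prop: permanence}, and what remains is bookkeeping. The one point that needs care is the choice of the re-indexing $m(n)$, which must simultaneously satisfy $m(n)\to\infty$ (so the speed-change operation is legitimate) and $k_{m(n)}/d_n\to 0$ (so the padded-in fixed points form a vanishing fraction); this is why I build the thresholds $N_j$ so as to force $d_n\geq m(n)\,k_{m(n)}$. I would also note in passing that each of the three operations outputs a sequence that is again ``approximate homomorphisms with stabilizer type $\Theta$'', so they chain cleanly, and that all intermediate dimension sequences ($k_{m(n)}$, $q_n k_{m(n)}$, $d_n$) tend to infinity, so Proposition \ref{prop: permanence} genuinely applies at each step.
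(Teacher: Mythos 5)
Your proposal is correct and follows essentially the same route as the paper, whose proof of this proposition is just the citation to \cite[Lemma 7.6]{BLT} together with Proposition \ref{prop: permanence}: your chain of speed change, amplification, and padding --- with the threshold construction forcing $d_n\geq m(n)k_{m(n)}$ so that both $m(n)\to\infty$ and $k_{m(n)}/d_n\to 0$ --- is precisely that argument written out, and I see no gaps.
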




We now have the tools to explain why Theorem \ref{thm:equivalence of IRS means conjugacy} implies \cite[Theorem 1.3]{BLT}.

\begin{cor}[Theorem 1.3 of \cite{BLT}] \label{cor:BLT}
Let $G$ be an amenable group. Then $G$ is permutation stable if and only if for every $\Theta\in \IRS(G)$, there is a sequence of positive integers $(d_{n})_{n}$ and a sequence of homomorphism $\kappa_{n}\colon G\to \Sym(d_{n})$ with $(\Stab_{\kappa_{n}})_{*}(u_{d_{n}})\to_{n\to\infty}\Theta$ in the weak$^{*}$-topology. Here $\Stab_{\kappa_{n}}$ is the map $j\mapsto \{g\in G:\kappa_{n}(g)(j)=j\}$.
\end{cor}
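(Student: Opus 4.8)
### Proof proposal

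The plan is to deduce Corollary \ref{cor:BLT} from Theorem \ref{thm:equivalence of IRS means conjugacy} together with the two permanence-type propositions (Proposition \ref{prop: permanence} and Proposition \ref{prop:change the sequence}). The key point is the standard observation that a group is permutation stable if and only if \emph{every} sequence of approximate homomorphisms is asymptotically conjugate to a sequence of \emph{honest} homomorphisms, which in turn (for fixed dimension data) is governed by the IRS via Theorem \ref{thm:equivalence of IRS means conjugacy}.

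For the forward direction, suppose $G$ is permutation stable and fix $\Theta\in\IRS(G)$. First I would produce \emph{some} sequence of approximate homomorphisms $\sigma_n\colon G\to\Sym(d_n)$ with stabilizer type $\Theta$: this is exactly the content of the Loeb-space construction, or alternatively one can take the sequence realizing $\Theta$ as a subsequential limit of $\Theta_n=(S_{\sigma_n})_*(u_{d_n})$ directly, using that every $\Theta\in\IRS(G)$ arises as such a limit (this is implicit in the discussion around Definition \ref{defn: IRS of approx homom}; concretely, pick a pmp action $G\actson(X,\mu)$ with $\Stab_*\mu=\Theta$, Rokhlin/quasi-tile the action to get finite approximate actions, and read off $\Theta$ as the limiting stabilizer distribution). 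By permutation stability, there exist honest homomorphisms $\kappa_n\colon G\to\Sym(d_n)$ with $d_{\Hamm}(\sigma_n(g),\kappa_n(g))\to 0$ for all $g\in G$. By Proposition \ref{prop: permanence}(\ref{item: perturbation}) the sequence $\kappa_n$ still has stabilizer type $\Theta$, which unwinds to say precisely that $(\Stab_{\kappa_n})_*(u_{d_n})\to\Theta$ weak$^*$. That gives the desired homomorphisms (after possibly passing to a subsequence to arrange $d_n\to\infty$, which is harmless, and using Proposition \ref{prop:change the sequence} if one insists on an arbitrary prescribed dimension sequence).

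For the reverse direction, suppose that for every $\Theta\in\IRS(G)$ there are honest homomorphisms $\kappa_n^\Theta\colon G\to\Sym(d_n^\Theta)$ with $(\Stab_{\kappa_n^\Theta})_*(u_{d_n^\Theta})\to\Theta$. Let $\sigma_n\colon G\to\Sym(d_n)$ be an arbitrary sequence of approximate homomorphisms; to show $G$ is permutation stable I must show $\sigma_n$ is asymptotically close to a sequence of homomorphisms. Fix a free ultrafilter $\omega$ and set $\Theta=\IRS(\sigma_\omega)$. Using Proposition \ref{prop:change the sequence} (and Proposition \ref{prop: permanence}) I can manufacture, on the \emph{same} dimensions $d_n$, a sequence of honest homomorphisms $\kappa_n\colon G\to\Sym(d_n)$ with stabilizer type $\Theta$; in particular $\IRS(\kappa_\omega)=\Theta=\IRS(\sigma_\omega)$. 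Since $G$ is amenable, Theorem \ref{thm:equivalence of IRS means conjugacy} applies and gives $\chi=(\chi_n)_{n\to\omega}\in\cS_\omega$ with $\chi\sigma_\omega(g)\chi^{-1}=\kappa_\omega(g)$ for all $g\in G$, i.e. $d_{\Hamm}(\chi_n\sigma_n(g)\chi_n^{-1},\kappa_n(g))\to_{n\to\omega}0$. Replacing $\sigma_n$ by $\chi_n\sigma_n\chi_n^{-1}$ (still approximate homomorphisms, still with the same relevant properties) shows $\sigma_n$ is within $d_{\Hamm}$-distance tending to $0$ along $\omega$ of honest homomorphisms $\kappa_n$. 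A routine ultrafilter-exhaustion/diagonal argument — exactly the one used in the proof of Corollary \ref{cor:AP stability}(\ref{item:AP asymptotic}), replacing $\omega$-limits by honest limits by running over all ultrafilters — upgrades this to $\lim_{n\to\infty}d_{\Hamm}(\sigma_n(g),\kappa_n(g))=0$ for all $g$, which is permutation stability.

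The main obstacle I anticipate is not the conjugacy step (which is handed to us by Theorem \ref{thm:equivalence of IRS means conjugacy}) but the bookkeeping in the reverse direction: ensuring that the honest homomorphisms approximating $\Theta$ can be placed on the \emph{exact} dimension sequence $d_n$ coming from the given $\sigma_n$, and handling the fact that $\IRS(\sigma_\omega)$ may a priori depend on $\omega$. The first is dispatched by Proposition \ref{prop:change the sequence}; the second requires running the argument separately for each ultrafilter and then invoking the standard fact that pointwise closeness to homomorphisms along every ultrafilter is equivalent to pointwise closeness in the honest $n\to\infty$ limit (the contradiction-via-a-bad-subsequence argument already appears verbatim in the proof of Corollary \ref{cor:AP stability}, and I would simply cite or repeat it).
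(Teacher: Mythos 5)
Your proposal follows the same overall architecture as the paper's proof: both directions are reduced to Theorem \ref{thm:equivalence of IRS means conjugacy} together with Propositions \ref{prop: permanence} and \ref{prop:change the sequence}, and your treatment of the implication ``every IRS is approximable $\Rightarrow$ $G$ is permutation stable'' (including the care about the dependence of $\IRS(\sigma_{\omega})$ on $\omega$, the use of Proposition \ref{prop:change the sequence} to match dimensions, and the ultrafilter-exhaustion upgrade to an honest limit) is essentially identical to the paper's.

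The gap is in the other direction, at the step where you ``produce some sequence of approximate homomorphisms $\sigma_{n}$ with stabilizer type $\Theta$.'' This is the substantive content of that implication, and neither of your two justifications is adequate. The Loeb-space construction goes the wrong way: it manufactures an IRS from a given sequence of approximate homomorphisms, not approximate homomorphisms from a prescribed IRS. The assertion that ``every $\Theta\in \IRS(G)$ arises as such a limit'' is not implicit in the discussion around Definition \ref{defn: IRS of approx homom}; for a general countable group it is not even known (already the case $\Theta=\delta_{\{1\}}$ is the soficity of $G$), and for amenable $G$ it is exactly the point at which amenability enters. The paper proves it by choosing an action $G\actson (X,\nu)$ with $\Stab_{*}\nu=\Theta$, invoking Ornstein--Weiss to get hyperfiniteness, hence soficity, of the orbit equivalence relation $\cR$, restricting a sofic approximation $\pi$ of $L(\cR)$ to $G$ to obtain the $\sigma_{n}$, and then proving the Claim that $u_{d_{n}}(B_{g,n}\Delta \Fix(\sigma_{n}(g)))\to 0$ where $\pi(1_{\Fix(g)})=(1_{B_{g,n}})_{n\to\omega}$; this last verification is what identifies the stabilizer type of the restricted sequence as $\Theta$, and it is not automatic. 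Your quasi-tiling sketch is a plausible alternative route to the same statement, but as written it is an unproved assertion standing in for the main lemma of this direction. (Once that step is supplied, your use of Proposition \ref{prop: permanence}(\ref{item: perturbation}) to transfer the stabilizer type from $\sigma_{n}$ to the honest homomorphisms is a clean shortcut past the explicit $\cE_{F}$ computation the paper performs at the end.)
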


\begin{proof}
First suppose that for every $\Theta\in \IRS(G)$, there is a sequence of positive integers $(d_{n})_{n}$ and a sequence of homomorphisms $\kappa_{n}\colon G\to \Sym(d_{n})$ with $(\Stab_{\kappa_{n}})_{*}(u_{d_{n}})\to_{n\to\infty}\Theta$ in the weak$^{*}$-topology. Let $\sigma_{n}\colon G\to \Sym(k_{n})$ be asymptotic homomorphisms, and fix $\omega\in \beta\bN\setminus \bN$. Let $\Theta=\IRS(\sigma_{\omega})$. By assumption, we can find  a sequence of homomorphisms $\kappa_{n}\colon G\to \Sym(d_{n})$ with $(\Stab_{\kappa_{n}})_{*}(u_{d_{n}})\to_{n\to\infty}\Theta$ in the weak$^{*}$-topology. By Proposition \ref{prop:change the sequence}, we may assume that $d_{n}=k_{n}$. Set $\kappa_{\omega}=(\kappa_{n})_{n\to\omega}$. Then by Theorem \ref{thm:equivalence of IRS means conjugacy}, $\sigma_{\omega},\kappa_{\omega}$ are conjugate. Since $\omega$ was arbitrary, this implies that there is a sequence of permutations $\chi_{n}\in \Sym(d_{n})$ with
\[d_{\Hamm}(\sigma_{n}(g),\chi_{n}\kappa_{n}(g)\chi_{n}^{-1})\to_{n\to\infty}0 \textnormal{ for all $g\in G$}.\]
Since $g\mapsto \chi_{n}\kappa_{n}(g)\chi_{n}^{-1}$ are homomorphisms, this proves that $G$ is permutation stable.

Now suppose that $G$ is permutation stable. Let $\Theta\in \IRS(G)$, and choose an action $G\actson (X,\nu)$ with $\Stab_{*}(\nu)=\Theta$ (e.g. the $\Theta$-Bernoulli action of $G$ with base $([0,1],m)$). Let $\cR$ be the orbit equivalence relation of $G\actson (X,\nu)$. Since $G$ is amenable, \cite{OWAnnounce}, \cite[II \S 3]{OrnWeiss} (see also  \cite{CFW}) imply that $\cR$ is hyperfinite and thus sofic (see e.g. \cite[Proposition 3.4]{PaunSofic}). By Proposition \ref{prop: equivalent definition of a sofic rep}, this implies that for every free ultrafilter $\omega$, we may find a trace-preserving homomorphism
\[\pi\colon L(\cR)\to \cM:=\prod_{n\to\omega}(M_{d_{n}}(\bC),\tr)\]
such that $\pi(L^{\infty}(X))\subseteq L^{\infty}(\cL)$ and $\pi([\cR])\subseteq \cS$. For $g\in G$, let $p_{g}=1_{\Fix(g)}\in L^{\infty}(X)$ and apply \cite[Lemma 5.4.2 (i)]{AP} to find $B_{g,n}\subseteq \{1,\cdots,d_{n}\}$ with $\pi(p_{g})=(1_{B_{g,n}})_{n\to\omega}$. For $g\in G$, let $\pi(\lambda(g))=(\sigma_{n}(g))_{n\to\omega}$. We first note the following.

\emph{Claim: $u_{d_{n}}(B_{g,n}\Delta \Fix(\sigma_{n}(g)))\to_{n\to\omega}0$ for every $g\in G$.}
To prove the claim, note that:
\[\lambda(g)p_{g}=p_{g},\textnormal{ and }\tau(\lambda(g)(1-p_{g}))=0.\]
Let $p_{g,n}=1_{B_{g,n}}\in \ell^{\infty}(d_{n})$ which we view as a subset of $M_{n}(\bC).$ Thus:
\[2u_{d_{n}}(\{j\in B_{g,n}:\sigma_{n}(g)(j)\ne j\})=\|\sigma_{n}(g)p_{g,n}-p_{g,n}\|_{2}^{2}\to_{n\to\omega}0,\]
the last part following as $\sigma_{n}$ is a sofic approximation and $\lambda(g)p_{g}=p_{g}$.
On the other hand,
\[u_{d_{n}}(\{j\in B_{g,n}^{c}:\sigma_{n}(g)(j)= j\})=\tr(\sigma_{n}(g)(1-p_{g,n}))\to_{n\to\omega}\tau(\lambda(g)(1-p_{g}))=0,\]
the second-to-last part following as $\sigma_{n}$ is a sofic approximation. This proves the claim.

For $F\subseteq G$ let $\mathcal{E}_{F}\in C(\Sub(G))$ be the indicator function of $\{H:F\subseteq H\}$.
The claim similarly implies that for $F\subseteq G$ is finite if $\pi(\cE_{F})=(1_{B_{F,n}})_{n\to\omega}$, then
\[u_{d_{n}}\left(B_{F,n}\Delta\bigcap_{g\in F}\Fix(\sigma_{n}(g))\right)\to_{n\to\omega}0.\]
Setting $\varsigma_{F,n}=\bigcap_{g\in F}\Fix(\sigma_{n}(g))$, we then have $\pi(\cE_{F})=(1_{\varsigma_{F,n}})_{n\to\omega}$. Since $G$ is permutation stable, we may find honest homomorphisms $\kappa_{n}\colon G\to \Sym(n)$ so that $\sigma\circ \Xi=(\kappa_{n})_{n\to\omega}$ where $\Xi\colon G\to [\cR]$ is the map $\Xi(g)(x)=gx$. So for any finite $F\subseteq G$:
\begin{align*}
  \lim_{n\to\omega}(\Stab_{\kappa_{n}})_{*}(\{H:F\subseteq H\})&=\lim_{n\to\omega}u_{d_{n}}(\{j:\kappa_{n}(g)(j)=j \textnormal{ for all $g\in F$}\})\\
  &=\lim_{n\to\omega}u_{d_{n}}(\{j:\sigma_{n}(g)(j)=j \textnormal{ for all $g\in F$}\})\\
  &=\lim_{n\to\omega}u_{d_{n}}(\varsigma_{F,n})\\
  &=\lim_{n\to\omega}u_{d_{n}}(B_{F,n})\\
  &=\tau(1_{\cE_{F}})\\
  &=\nu(\{x:gx=x  \textnormal{ for all $g\in F$\}})\\
  &=\Theta(\{H:F\subseteq H\}).
\end{align*}
As in Corollary \ref{cor:AP stability} this implies that
$\lim_{n\to\omega}(\Stab_{\kappa_{n}})_{*}(u_{n})=\Theta.$ Since this holds for all $\omega$, we may find a sequence of honest homomorphisms  $\kappa_{n}\colon G\to \Sym(n)$ with
\[\lim_{n\to\infty}(\Stab_{\kappa_{n}})_{*}(u_{n})=\Theta.\]
\end{proof}

The proof of the reverse implication given above is similar to the one in \cite[Proposition 3.15]{APStable}.

\subsection*{Acknowledgments}
The authors thank Andreas Thom and Liviu P\u{a}unescu for helpful comments on an earlier version of this paper.
B. Hayes gratefully acknowledges support from the NSF grant DMS-2144739. S. Kunnawalkam Elayavalli gratefully acknowledges support from the Simons Postdoctoral Fellowship. Both authors thank the anonymous referee for their numerous comments, which greatly improved the paper.

%
%

\end{document}